\theoremstyle{plain}
\newtheorem{theorem}{Theorem}[section]
\newtheorem{lemma}[theorem]{Lemma}
\newtheorem{corollary}[theorem]{Corollary}
\theoremstyle{definition}
\newtheorem{definition}[theorem]{Definition}
\theoremstyle{remark}
\begin{document}

\title{Stability Analysis for a Class of Sparse  Optimization Problems}
\author{
\name{Jialiang Xu\textsuperscript{a}\thanks{Yun-Bin Zhao. Email: y.zhao.2@bham.ac.uk} and Yun-Bin Zhao\textsuperscript{b}}
\affil{\textsuperscript{a,b} School of Mathematics, University of Birmingham, Edgbaston, Birmingham, B15 2TT, United Kingdom}
}
\maketitle

\begin{abstract}  The sparse optimization problems  arise in many areas of science and engineering,  such as compressed  sensing, image processing, statistical  and machine learning. The $\ell_{0}$-minimization problem is one of such optimization problems, which is typically used to deal with signal recovery. The $\ell_{1}$-minimization method is  one of the plausible approaches for solving the $\ell_{0}$-minimization problems, and thus the stability of such a numerical method is vital for signal recovery. In this paper,   we establish a stability result for the  $\ell_{1}$-minimization problems associated with a general class of $\ell_{0}$-minimization problems. To this goal,  we introduce the concept of  restricted weak range space property (RSP) of a transposed sensing matrix, which is a generalized version of the weak RSP of the transposed sensing matrix introduced in [Zhao et al., Math. Oper. Res., 44(2019), 175-193].  The stability result  established in this paper includes  several  existing  ones as special cases.
 
\end{abstract}

\begin{keywords}  Sparsity optimization; $\ell_{1}$-minimization; stability; optimality condition; Hoffman theorem;  restricted weak range space property.

\end{keywords}

\section{Introduction}
 The sparsity   is  a  useful assumption under which  the sparse optimization models arise frequently in many areas in science and engineering. Let $A\in R^{m\times n} (m\ll n) $, $B\in R^{l\times n} (l< n)$ and $U\in R^{m\times h} (m\ll h)$ be three given full-row-rank matrices. Let  $y\in R^{m}$ and $b\in R^{l}$ be given vectors  and $\varepsilon$ be a positive number.   Consider the following sparse optimization model:
\begin{equation}\label{Ps0}
\begin{array}{lcl}
&\min\limits_{x\in R^{n}}&~\left \| x \right \|_{0}\\
& $s.t$.&~ a_{1}\left \| y-Ax \right \|_{2}+a_{2}\left\| U^{T}(Ax-y)\right\|_{\infty}+a_{3}\left\| U^{T}(Ax-y)\right\|_{1} \leq \varepsilon \\
&&~ Bx\leq b,
\end{array}
\end{equation}
where $\Vert x\Vert_{0}$ is called the `$\ell_{0}$-norm' which counts the number of nonzero components of $x$, and $a_{1}, a_{2}$ and $a_{3}$ are  given nonnegative parameters satisfying  $\sum_{i=1}^{3} a_{i}=1$. Many problems in   signal and image processing (see, e.g., \cite{C06,D06,Redbook}) and statistical regressions \cite{greenbook} can be formulated as  the form \eqref{Ps0} or its special cases.
In problem \eqref{Ps0}, the constraint $Bx\leq b$  is motivated by some practical applications. For instance, many signal recovery models  might  need to include certain  constraints   reflecting   special structures of the target signal.  For simplicity,  we define $$\phi(x)=U^{T}(Ax-y),$$
and write the problem \eqref{Ps0} as
\begin{equation*}
\min\limits_{x\in R^{n}}\left\{ \left \| x \right \|_{0}: ~ a_{1}\left \| y-Ax \right \|_{2}+a_{2}\left\| \phi(x)\right\|_{\infty}+a_{3}\left\| \phi(x)\right\|_{1} \leq \varepsilon, Bx\leq b \right\}.
\end{equation*}
The following  $\ell_{0}$-minimization  models   are clearly  the special cases of \eqref{Ps0}:
 \begin{equation*}
\begin{array}{ll}
$(C1)$~\min\limits_{x} \lbrace \Vert x\Vert_{0}:~ y=Ax\rbrace; & $(C2)$~ \min\limits_{x} \lbrace  \Vert x\Vert_{0}:~\left\| y-Ax\right\|_{2}\leq \varepsilon\rbrace;\\
$(C3)$ ~\min\limits_{x} \lbrace  \Vert x\Vert_{0}:~\left\| U^{T}(Ax-y)\right\|_{1}\leq \varepsilon\rbrace; &  $(C4)$ ~\min\limits_{x} \lbrace \Vert x\Vert_{0}:~\left\| U^{T}(Ax-y)\right\|_{\infty}\leq \varepsilon\rbrace.
\end{array}
\end{equation*}
The problem (C1) is often called the standard $\ell_{0}$-minimization problem \cite{Redbook,candes2005,zhaobook2018}.
Two structured sparsity models, called  the nonnegative sparsity model  \cite{candes2005,CERT2006,Redbook,zhaobook2018} and the monotonic sparsity model  (isotonic regression) \cite{THT2011, greenbook},  are also  the special cases of the model \eqref{Ps0}.

 It is well known that  $\ell_{1}$-minimization  is a useful method to solve the  $\ell_{0}$-minimization problem. By replacing the $\ell_{0}$-norm with the $\ell_{1}$-norm in problem \eqref{Ps0},  we immediately obtain the $\ell_{1}$-minimization problem
\begin{equation}\label{Ps}
\min\limits_{x}\lbrace\left \| x \right \|_{1}: ~a_{1}\left \| y-Ax \right \|_{2}+a_{2}\left\| \phi(x)\right\|_{\infty}+a_{3}\left\| \phi(x)\right\|_{1} \leq \varepsilon, Bx\leq b \rbrace.
\end{equation}
Similar to its $\ell_{0}$ counterpart, the problem \eqref{Ps} includes the following special cases:
 \begin{equation*}
\begin{array}{ll}
$(D1)$~\min\limits_{x} \lbrace \Vert x\Vert_{1}:~ y=Ax\rbrace; & $(D2)$~ \min\limits_{x} \lbrace\Vert x\Vert_{1}:~\left\| y-Ax\right\|_{2}\leq \varepsilon\rbrace;\\
$(D3)$ ~\min\limits_{x} \lbrace \Vert x\Vert_{1}:~\left\| U^{T}(Ax-y)\right\|_{1}\leq \varepsilon\rbrace; &  $(D4)$ ~\min\limits_{x} \lbrace  \Vert x\Vert_{1}:~\left\| U^{T}(Ax-y)\right\|_{\infty}\leq \varepsilon\rbrace.
\end{array}
\end{equation*}
 The problem   (D2)  is often called  quadratically constrained basis pursuit \cite{Redbook,CP2011,zhaobook2018}, and it reduces to (D1) if $\varepsilon=0$,  which is called  standard $\ell_{1}$-minimization or the basis pursuit \cite{cohen2009,candes2005,donoho2001,zhao2013, Redbook}.  The  problem  (D4) is the type of   Dantzig Selectors  \cite{candes2007,Redbook}.

From both numerical and theoretical viewpoints,  it is important to know  how close  the solutions of $\ell_{0}$- and $\ell_{1}$-minimization problems are. To address this question, one needs to study  the stability of $\ell_{1}$-minimization methods.  The stability of a sparse optimization method can be described as follows:    For  any $ x\in R^{n}$  in the feasible set of  a sparse optimization problem,  the solution  $x^{\#}$ generated by the method    satisfies the following bound:
\begin{equation}\label{criterion3}
 \small\left\| x-x^{\#}\small\right\|_{2}\leq C_{1}\sigma_{k}(x)_{1}+C_{2}\varepsilon
\end{equation}
 where  $C_{1}$ and $C_{2}$ are constants, and $\sigma_{k}(x)_{1}$ is called the  error of the  best $k$-term approximation of the vector $x$ (see, e.g., \cite{cohen2009,Redbook}):
\begin{equation*}\label{k-best}
\sigma _{k}\left ( x \right )_{1}=\min_{z}\lbrace \left \| x-z \right \|_{1}:~\Vert z\Vert_{0}\leq k\rbrace.
\end{equation*}

 In this paper, we  establish a stability result for the $\ell_{1}$-minimization method \eqref{Ps}.  The stability of  (D1) and (D2) has been investigated by  Donoho, Cand\`es, Tao, Romberg and others \cite{D03,D06,C06,CERT2006,candes2005,cohen2009,zhang2005,eldarbook2012,CCW2016} under various assumptions such as  the  so-called  restricted isometry property (RIP) of order $k$, mutual coherence, stable null space property (NSP) of order $k$ or robust NSP of order $k$.  The  RIP  of order $k$  was introduced by Cand\`es and Tao \cite{candes2005} to study the stability of $\ell_{1}$-minimization.
The singular-value-property-based stability analysis for (D1), (D2) and the Dantzig Selector  have  also been performed by Tang and Nehorai in \cite{tang2011}.

 A new and unified stability analysis for $\ell_{1}$-minimization methods has been developed by Zhao, Jiang and Luo \cite{YHZ2018}  under the assumption of  weak RSP of order $k$, which has been proven as a  necessary and sufficient condition for  the standard $\ell_{1}$-minimization to be stable.   The main differnece between the weak-RSP-based-analysis and existing ones lies in  the constants $C_{1}$ and $C_{2}$ in \eqref{criterion3}. Specifically,  the constants $ C_{1}$ and $C_{2}$ in  \eqref{criterion3} are determined by  the RIP or NSP constant in existing analysis  \cite{Redbook,CCW2016,candes2005}. However, in \cite{YHZ2018,zhaobook2018}, these constants are determined by the so-called Robinson's constant. Motivated by the new analysis tool introduced in \cite{YHZ2018},   we  develop the stability result  for the  model \eqref{Ps} in this paper under the assumption of restricted weak range space property ($\mathrm{RSP}$) of order $k$ (which will be introduced in next section). Our result extends the stability theorem for $\ell_{1}$-minimization established by Zhao et al. \cite{YHZ2018,zhaolistability,zhaobook2018}.

This paper is organized as follows.  In Section \ref{section weak rsp}, we  introduce the concept of  restricted weak $\mathrm{RSP}$ of order $k$.  An approximation of the solution set of \eqref{Ps}  will be discussed in Section \ref{section approxiamtion problem}.  Then, in Section \ref{section stability theorem}, we show the main stability result of this paper.  Finally,   some special cases are discussed in Section \ref{section special case}.

\section*{Notation}
 The field of real numbers is denoted by $R$ and the $n$-dimensional Euclidean space is denoted by $R^{n}$. Let $R^{n}_{+}$ and $R^{n}_{-}$ be the sets of nonnegative and nonpositive vectors, respectively.  Unless otherwise stated, the identity matrix of suitable size is denoted by $I$.   Given a vector $u\in R^{n}$,  $\vert u\vert$, $(u)^{+}$ and $(u)^{-}$ denote the vectors with components $\vert u\vert_{j}=\vert u_{j}\vert$, $[(u)^{+}]_{j}=\max\lbrace u_{j},0\rbrace$ and $[(u)^{-}]_{j}=\min\lbrace u_{j},0\rbrace$, $j=1,...,n$, respectively. The cardinality of the set $S$ is denoted by $\vert S\vert$ and the complementary set of $S\subseteq \left\{ 1,...,n\right\}$ is denoted by $\bar{S}$, i.e., $\bar{S}=\lbrace 1,...,n\rbrace \setminus S$.  For a given vector $x\in R^{n}$,    $x_{S}$ denotes the vector supported on $S$.   $a_{i,j}$ denotes the entry of the matrix $A$ in row $i$ and column $j$.   For the  set $S\subseteq \lbrace 1,...,n\rbrace$, $A_{S}$ denotes the submatrix of $A\in R^{m\times n}$ obtained by deleting the columns indexed by $\bar{S}$. For a matrix $A=\left(a_{i,j}\right)$, $\vert A\vert$ represents the absolute version of $A$, i.e., $\vert A\vert=\left(\vert a_{i,j}\vert\right)$. $\mathcal{R}\left(A^{T}\right)=\lbrace A^{T}y: y\in R^{m}\rbrace$ is the range space of $A^{T}$.    $\left\| x\right\|_{p}=\left(\sum_{i=1}^{n}\left| x_{i}\right|^{p}\right)^{1/p}$, where $p\geq 1$,   is a norm, called the $\ell_{p}$-norm of $x$.  $\left \| x \right \|_{\infty}=\max_{i=1}^{n}\left| x_{i}\right|$ is called the $\ell_{\infty}$-norm of $x$. For $1\leq p,q\leq \infty$,
$\left\| A\right\|_{p\rightarrow q}=\sup_{\left\| x\right\|_{p}\leq 1} \left\| Ax\right\|_{q}$ is the matrix norm induced by $\ell_{p}$- and $\ell_{q}$-norms.

\section{Restricted weak range space property}\label{section weak rsp}
The  $\mathrm{RSP}$ of order $k$ of a  transposed matrix was first introduced  in \cite{zhao2013,zhao2014} to develop   a  necessary and sufficient condition for the uniform recovery of sparse signals via $\ell_{1}$-minimization.  
 Zhao et al. \cite{YHZ2018} generalised the  $\mathrm{RSP}$ of order $k$ to the following weak $\mathrm{RSP}$ of order $k$ to develop a stability theory for convex optimization algorithms:
 \begin{definition}[weak $\mathrm{RSP}$ of order $k$]
Given a matrix $A\in R^{m\times n}$,  $A^{T}$ is said to satisfy the weak $\mathrm{RSP}$ order $k$ if for any two disjoint sets $J_{1}, J_{2}\subseteq \lbrace 1,...,n\rbrace$ satisfying $\vert J_{1}\vert+\vert J_{2}\vert\leq k$, there exists a vector $\eta\in \mathcal{R}\left(A^{T}\right)$ such that
\begin{equation*}
\left\{\begin{array}{lll}
  \eta_{i}=1 & \mathrm{if} ~i\in J_{1},\\
 \eta_{i}=-1  & \mathrm{if} ~i\in J_{2},\\
 \vert \eta_{i}\vert\leq 1  & \mathrm{if} ~i\notin J_{1} \cup J_{2}.
\end{array}\right.
\end{equation*}
\end{definition}
\noindent In \cite{YHZ2018,zhaobook2018}, it was shown that the weak $\mathrm{RSP}$ of order $k$  is a sufficient condition for the stability of many convex optimization methods, and it is also a  necessary stability condition for many optimization methods.

Different from the problems (D1)-(D4), the problem \eqref{Ps} is  more general  than these models. To investigate the stability of the problem \eqref{Ps}, we need to extend the  notion of weak RSP of order $k$ to the so-called restricted weak $\mathrm{RSP}$ of order $k$, which  is defined as follows:
  \begin{definition}[Restricted weak $\mathrm{RSP}$ of order $k$]
Given  matrices $A\in R^{m\times n}$ and $B\in R^{l\times n}$, the pair $\left(A^{T}, B^{T}\right)$ is said to satisfy the restricted weak $\mathrm{RSP}$ of order $k$ if for any two disjoint sets  $J_{1}, J_{2}\subseteq \lbrace 1,...,n\rbrace$ satisfying $\vert J_{1}\vert+\vert J_{2}\vert\leq k$, there exists a vector $\eta\in \mathcal{R}\left(A^{T}, B^{T}\right)$ such that $\eta=\left(A^{T},B^{T}\right)\left(\begin{array}{c}
\nu\\h
\end{array} \right)$  where $\nu\in R^{m}$, $h\in R^{l}_{-}$ and
\begin{equation*}
\left\{\begin{array}{ll}
 \eta_{i}=1 & \mathrm{if} ~ i\in J_{1},\\
 \eta_{i}=-1 &\mathrm{if} ~ i\in J_{2},\\
 \vert \eta_{i}\vert\leq 1 &\mathrm{if} ~ i\notin J_{1} \cup J_{2}.
\end{array}\right.
\end{equation*}
\end{definition}
\noindent  It is worth mentioning that a generalized version of the RSP  of order $k$ is also used in  \cite{zhaochun2016} to study  the exact sign recovery in 1-bit compressive sensing.

\section{Approximation of \eqref{Ps} and its solution set} \label{section approxiamtion problem}
By introducing the slack variables $r$, $s$, $\xi$ and $v$, the problem \eqref{Ps} can be rewritten as
\begin{equation}\label{Ps1}
\begin{array}{lcl}
&\min\limits_{(x, r, s, \xi, v)}&\left \| x \right \|_{1}\\
& $s.t$.& a_{1}s+a_{2}\xi+a_{3}\left(e^{h}\right)^{T}v \leq \varepsilon,\\
& & r\in s \mathcal{B}, ~r=y-Ax,~(s,\xi, v)\geq 0,\\
& & \left\| \phi(x)\right\|_{\infty}\leq \xi, ~\left|  \phi(x)\right| \leq v,~Bx\leq b,
\end{array}
\end{equation}
where $e^{h}$ is the vector of ones in $R^{h}$ and $\mathcal{B}$ is the unit $\ell_{2}$-ball  defined as  $\mathcal{B}=\lbrace z\in R^{m}:\left\|z \right\|_{2}\leq 1 \rbrace$. The unit ball $\mathcal{B}$ can be also  described as
\begin{equation}\label{unit ball}
\mathcal{B}=\bigcap\limits_{\Vert \textbf{a}\Vert_{2}=1}\lbrace z\in R^{m}: \textbf{a}^{T}z\leq 1\rbrace.
\end{equation}
 Denote the set ${\rm E}$ by
\begin{equation*}
\begin{split}
{\rm E}=\lbrace (x, s, \xi,v): &~ a_{1}s+a_{2}\xi+a_{3}(e^{h})^{T}v \leq \varepsilon,~ Bx\leq b,~\left\| \phi(x)\right\|_{\infty}\leq \xi,\\
& ~\left|  \phi(x)\right| \leq v,~(s,\xi, v)\geq 0\rbrace,
\end{split}
\end{equation*}
and hence the solution set of \eqref{Ps1} can be represented as
\begin{equation}\label{Omega}
\Omega^{*}=\lbrace (x,r,s,\xi,v): \Vert x\Vert_{1}\leq \theta^{*},~r\in s \mathcal{B}, ~r=y-Ax,~ (x,s,\xi,v)\in {\rm E} \rbrace,
\end{equation}
where $\theta^{*}$ is the optimal value of \eqref{Ps1}.  By replacing $\mathcal{B}$ in \eqref{Omega} with  a polytope $P\supseteq \mathcal{B}$, we can get the relaxation  of  $\Omega^{*}$, denoted by $\Omega_{ P}$, i.e.,
\begin{equation}\label{Omega1}
\Omega_{ P}=\lbrace (x,r,s,\xi,v): \Vert x\Vert_{1}\leq \theta^{*},~r\in s P, ~r=y-Ax,~ (x,s,\xi,v)\in {\rm E} \rbrace.
\end{equation}
The polytope  $\Omega_{P}$ can   approximate $\Omega^{*}$ to any  level of accuracy provided that $P$ is chosen suitably.
Recall the Hausdorff metric of  two sets $M_{1}, M_{2}\subseteq  R^{m}$:
\begin{equation*}\label{hausdorff}
\delta^{\mathcal{H}}(M_{1}, M_{2})=\max\biggr \lbrace\sup_{x\in M_{1}} \inf_{z\in M_{2}}\left\| x-z\right\|_{2}, \sup_{z\in M_{2}} \inf_{x\in M_{1}}\left\| x-z\right\|_{2}\biggr \rbrace.
\end{equation*}
Following the analysis  in \cite{YHZ2018, zhaobook2018} (see Lemmas 5.1, 5.2 and 5.3  in \cite{YHZ2018}), we can obtain  the following lemma:

\begin{lemma}\label{lemma approximat level}
Let $\varepsilon$ be the given number in problem \eqref{Ps}. Then for any $\varepsilon'\leq \varepsilon$, there exists a polytope approximation $P$ of $\mathcal{B}$ satisfying $P\supseteq \mathcal{B}$ and
\begin{equation}\label{final polytope}
\delta^{\mathcal{H}}\left(\Omega^{*},\Omega_{P}\right)\leq \varepsilon'.
\end{equation}
\end{lemma}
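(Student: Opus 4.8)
The plan is to reduce the assertion to a one-sided estimate and then transport the polytope-approximation argument behind Lemmas 5.1--5.3 of \cite{YHZ2018} to the present, more general, setting. Since $\mathcal{B}\subseteq P$ forces $s\mathcal{B}\subseteq sP$ for every $s\geq 0$, the inclusion $\Omega^{*}\subseteq\Omega_{P}$ holds for \emph{any} polytope $P\supseteq\mathcal{B}$; comparing \eqref{Omega} and \eqref{Omega1} one sees moreover that the \emph{only} constraint distinguishing the two sets is the replacement of $r\in s\mathcal{B}$ by $r\in sP$. Hence $\sup_{w\in\Omega^{*}}\inf_{w'\in\Omega_{P}}\|w-w'\|_{2}=0$, so that $\delta^{\mathcal{H}}(\Omega^{*},\Omega_{P})=\sup_{w'\in\Omega_{P}}\inf_{w\in\Omega^{*}}\|w-w'\|_{2}$, and it suffices to bound this one-sided distance. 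Next, as in Lemma 5.1 of \cite{YHZ2018}, for any $\gamma>0$ I would pick finitely many unit vectors $\mathbf{a}^{(1)},\dots,\mathbf{a}^{(N)}$ forming a fine net on the sphere and set $P=\bigcap_{i=1}^{N}\{z:(\mathbf{a}^{(i)})^{T}z\leq 1\}$; by the representation \eqref{unit ball} this gives $\mathcal{B}\subseteq P$, while refining the net yields $P\subseteq(1+\gamma)\mathcal{B}$, i.e. $\|z\|_{2}/(1+\gamma)\leq\rho_{P}(z)\leq\|z\|_{2}$ for all $z$, where $\rho_{P}$ is the gauge of $P$. Everything then comes down to showing $\sup_{w'\in\Omega_{P}}\inf_{w\in\Omega^{*}}\|w-w'\|_{2}\leq\psi(\gamma)$ for some $\psi$ with $\psi(\gamma)\to 0$, and finally choosing $\gamma$ so that $\psi(\gamma)\leq\varepsilon'$.

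For the displacement estimate I would first record the uniform bound that, on the common feasible region, $\|x\|_{1}\leq\theta^{*}$ keeps $x$, hence $r=y-Ax$ and $\phi(x)$, in a fixed compact set; in particular $\|r\|_{2}\leq R:=\|y\|_{2}+\|A\|_{1\rightarrow 2}\,\theta^{*}$. Fix $w'=(x,r,s,\xi,v)\in\Omega_{P}$. If $\|r\|_{2}\leq s$ then $r\in s\mathcal{B}$ and already $w'\in\Omega^{*}$, so assume $\rho_{P}(r)\leq s<\|r\|_{2}\leq(1+\gamma)\rho_{P}(r)$; then $\|r\|_{2}-s\leq\|r\|_{2}-\rho_{P}(r)\leq\gamma\|r\|_{2}\leq\gamma R$. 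Consequently, replacing $s$ by $\tilde{s}:=\|r\|_{2}$ moves $w'$ by at most $\gamma R$ and produces a point satisfying every constraint of \eqref{Ps1} except possibly the budget inequality $a_{1}\tilde{s}+a_{2}\xi+a_{3}(e^{h})^{T}v\leq\varepsilon$, which can then be violated by no more than $a_{1}(\|r\|_{2}-s)\leq a_{1}\gamma R$. If $a_{1}=0$ this already proves the claim with $\psi(\gamma)=\gamma R$. If $a_{1}>0$ one must in addition perturb $x$: following Lemmas 5.2--5.3 of \cite{YHZ2018}, move $x$ a controlled distance (toward an optimal solution $x^{*}$ of \eqref{Ps}, or, when available, toward a strictly feasible point, which absorbs the $O(\gamma)$ budget excess via an $O(\gamma)$-small change in $x$) so as to restore $\|y-Ax\|_{2}\leq s$ while keeping the remaining slack variables as in $w'$, and invoke a Hoffman/Robinson-type calmness bound for the convex system defining \eqref{Ps1} to land in $\Omega^{*}$. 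Taking the supremum over $w'\in\Omega_{P}$ then gives $\delta^{\mathcal{H}}(\Omega^{*},\Omega_{P})\leq\psi(\gamma)$ with $\psi(\gamma)\to 0$, and choosing $\gamma$ with $\psi(\gamma)\leq\varepsilon'\,(\leq\varepsilon)$ finishes the proof.

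The step I expect to be the main obstacle is precisely the last one. Because $\mathcal{B}$ is a \emph{proper} subset of $P$, a point of $\Omega_{P}$ may ``overspend'' the $\varepsilon$-budget on the term $a_{1}\|r\|_{2}$ by an amount of order $\gamma$, and this cannot be repaired merely by enlarging the slack $s$ (nor, without moving the point too far, by shrinking $\xi$ and $v$); one genuinely has to perturb $x$ and then argue, \emph{uniformly} in $w'\in\Omega_{P}$, that the feasible set of \eqref{Ps1} is reached by an $O(\gamma)$ (or, in degenerate situations, $O(\sqrt{\gamma})$) move. This is exactly what the compactness bound above, the full-row-rank hypotheses on $A$, $B$, $U$, and the calmness/metric-regularity estimates imported from \cite{YHZ2018} are for. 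The boundary case in which \eqref{Ps} admits no strictly feasible point (for instance $\varepsilon=0$, where in fact $\Omega_{P}=\Omega^{*}$ and there is nothing to prove) must be treated separately but is straightforward. The remaining ingredients --- the inclusion $\Omega^{*}\subseteq\Omega_{P}$, the construction of $P$, and the elementary gauge estimates --- are routine.
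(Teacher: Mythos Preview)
Your proposal is essentially the same as what the paper does: the paper gives no independent proof of this lemma but simply refers to Lemmas~5.1--5.3 of \cite{YHZ2018} (and \cite{zhaobook2018}), and your plan is precisely to transport that argument to the present setting. The ingredients you isolate---the one-sided reduction via $\Omega^{*}\subseteq\Omega_{P}$, the net-based polytope with $\mathcal{B}\subseteq P\subseteq(1+\gamma)\mathcal{B}$, the gauge estimate $\|r\|_{2}-s\le\gamma R$, and the final calmness/perturbation step to absorb the $O(\gamma)$ budget overshoot when $a_{1}>0$---match the structure of those lemmas, and your identification of the last step as the only nontrivial one is accurate.
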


In the remainder of this paper, we fix $\varepsilon'\in (0, \varepsilon]$ and choose the  polytope $P$  such that  $\Omega_{P}$ and $\Omega^{*}$ satisfy \eqref{final polytope}. The polytope $P$ can be represented as the intersection of a finite number of  half spaces:
$$P=\lbrace z\in R^{m}:\left(\textbf{a}^{i}\right)^{T}z\leq 1,  1\leq i\leq L\rbrace,$$
where $\textbf{a}^{i},~1\leq i\leq L$ are some unit vectors (i.e., $\Vert \textbf{a}^{i}\Vert_{2}=1$), and $L$ is an integer number. By adding the $2m$ half spaces $$\left(\beta^{j}\right)^{T}z\leq 1,~ -\left(\beta^{j}\right)^{T}z\leq 1, ~j=1,...,m$$ to $P$, where $\beta^{j}$ is the $j$th column of the $m\times m$  identity matrix,  we obtain the following polytope:
\begin{equation}\label{polytope appximated by B}
\begin{split}
P_{0}&=P\cap\left\{z\in R^{m}: \left(\beta^{j}\right)^{T}z\leq 1, -\left(\beta^{j}\right)^{T}z\leq 1, j=1,...,m\right\}\\
&=\left\{z\in R^{m}: \left(\textbf{a}^{i}\right)^{T}z \leq 1,  1\leq i\leq L;\left(\beta^{j}\right)^{T}z\leq 1, -\left(\beta^{j}\right)^{T}z\leq 1, j=1,...,m\right\}.
\end{split}
\end{equation}
 We define $T$ as the collection of the  vectors $\textbf{a}^{i}$ and $\pm\beta^{j}$ in $P_{0}$, that is,
\begin{equation*}\label{collection}
T:=\lbrace \textbf{a}^{i}: 1\leq i\leq L\rbrace \cup \lbrace \pm \beta^{j}: 1\leq j\leq m\rbrace.
\end{equation*}
Clearly, $P_{0}$ still satisfies  \eqref{final polytope} in Lemma \ref{lemma approximat level}, i.e.,  $$\delta^{\mathcal{H}}\left(\Omega^{*},\Omega_{P_0}\right)\leq \varepsilon'. $$ In the remainder of the chapter, we use the above defined polytope $P_{0}.$  Let  $N=\vert T\vert,$ and let $M_{P_{0}}$ be the matrix with  column vectors in $T$.  Thus  $P_{0}$ can  be written as
 \begin{equation*}\label{polytope approximate B1}
 P_{0}=\lbrace
 z\in R^{m}: \left(M_{P_{0}}\right)^{T} z\leq e^{ N}\rbrace,
\end{equation*}
where $e^{N}$ is the vector of ones in $R^{N}$.

By replacing  $\mathcal{B}$ by $P_{0}$, we  obtain the following approximation of the optimal value $\theta^{*}$ of \eqref{Ps}:
\begin{equation*}\label{optimal value appro}
\begin{split}
\theta^{*}_{P_{0}}:&=\min_{(x,r,s,\xi,v)}\lbrace\left\| x\right\|_{1}: ~r\in s P_{0}, ~r=y-Ax,~ (x,s,\xi,v)\in {\rm E} \rbrace\\
&=\min_{(x,s,\xi,v)}\lbrace\left\| x\right\|_{1}: ~ \left(M_{P_{0}}\right)^{T}(y-Ax)\leq se^{N},~(x,s,\xi,v)\in {\rm E} \rbrace.\\
\end{split}
\end{equation*}
The associated approximation problem of \eqref{Ps} can be written as
 \begin{equation}\label{Ps2}
\min\limits_{(x,  s, \xi, v)}\lbrace \left \| x \right \|_{1}:
 \left(M_{P_{0}}\right)^{T}(y-Ax)\leq se^{N},~ (x,s,\xi,v)\in {\rm E} \rbrace.
\end{equation}
The  solution set of \eqref{Ps2} is
\begin{equation}\label{solution set of appro}
\Omega^{*}_{P_{0}}=\lbrace x\in R^{n}:~\left\| x\right\|_{1}\leq \theta^{*}_{P_{0}}, ~r\in s P_{0}, ~r=y-Ax,~ (x,s,\xi,v)\in {\rm E}\rbrace.
\end{equation}
Note that $\mathcal{B}\subseteq P_{0}$ implies that $\theta^{*}\geq \theta^{*}_{P_{0}}$. So we can see that  $\Omega^{*}_{P_{0}}\subseteq \Omega_{P_{0}}$.  By the definition of  $P_{0}$, we also have $\Omega^{*}\subseteq\Omega_{P_{0}}$. In the next section, we prove the main  result for the problem \eqref{Ps}.

\section{Main result}\label{section stability theorem}
Introducing a variable $t$ yields the following equivalent form of \eqref{Ps2}:
\begin{equation}\label{Ps3}
\begin{array}{lcl}
&\min\limits_{(x, t, s, \xi, v)}& e^{T}t \\
& $s.t$.& a_{1}s+a_{2}\xi+a_{3}\left(e^{h}\right)^{T}v \leq \varepsilon,~ Bx\leq b,~\vert x\vert\leq t,\\
& & \left(M_{P_{0}}\right)^{T}(y-Ax)\leq se^{N},~(t, s,\xi, v)\geq 0,\\
& & \left\| \phi(x)\right\|_{\infty}\leq \xi, \left|  \phi(x)\right| \leq v.\\
\end{array}
\end{equation}
The solution set of \eqref{Ps3} is given as \eqref{solution set of appro}. Note that the above optimization problem is equivalent to a linear programming problem. In fact, the constraint $\left\|\phi(x)\right\|_{\infty}\leq \xi$ can be rewritten as
$\left|  \phi(x)\right|\leq \xi e^{h},$
where $e^{h}$ is the vector of ones in $R^{h}$.
Thus the model \eqref{Ps3} can be rewritten explicitly as the linear programming problem
\begin{equation}\label{Ps4}
\begin{array}{cl}
\min\limits_{(x, t, s, \xi, v)}& e^{T}t \\
  $s.t$.& x+t\geq 0,~-x+t\geq 0,\\
& -a_{1}s-a_{2}\xi-a_{3}\left(e^{h}\right)^{T}v \geq -\varepsilon, M_{P_{0}}^{T}Ax+e^{N}s\geq  M_{P_{0}}^{T}y,\\
& U^{T}Ax+\xi e^{h}\geq U^{T}y, -U^{T}Ax+\xi e^{h}\geq -U^{T}y ,\\
& U^{T}Ax+v\geq U^{T}y, -U^{T}Ax+v\geq -U^{T}y ,\\
& ~ -Bx\geq  -b,~ (t,s,\xi,v)\geq 0.
\end{array}
\end{equation}
The dual problem of \eqref{Ps4} is given as follows:
\begin{equation}\label{Psdual}
\begin{array}{cl}
\max\limits_{w}& -\varepsilon w_{3}+y^{T}M_{P_{0}}w_{4}+y^{T}U(w_{5}-w_{6}+w_{7}-w_{8})-b^{T}w_{9} \\
 $s.t$.& w_{1}-w_{2}+A^{T}M_{P_{0}}w_{4}+A^{T}U(w_{5}-w_{6}+w_{7}-w_{8})-B^{T}w_{9}=0,\\
& w_{1}+w_{2}\leq e,\\
& -a_{1}w_{3}+(e^{N})^{T}w_{4}\leq 0,\\
& -a_{2}w_{3}+(e^{h})^{T}(w_{5}+w_{6})\leq 0,\\
&-a_{3}w_{3}e^{h}+w_{7}+w_{8}\leq 0,\\
& w_{1},w_{2}\in R^{n}_{+},~w_{3}\in R_{+},~w_{4}\in R^{N}_{+}, ~w_{5-8}\in R^{h}_{+},~w_{9}\in R_{+}^{l}.
\end{array}
\end{equation}
 The optimality condition  yields  the following lemma:
\begin{lemma}\label{optimility condiyion for stability}
Denote by $u=(x,t,s,\xi,v,w)$. Then $x^{*}$ is an optimal solution of \eqref{Ps2} if and only if there exists a vector $u^{*}=(x^{*},t^{*},s^{*},\xi^{*},v^{*},w^{*})\in \Theta$, where  $\Theta$ is the set given as
\begin{equation*}\label{optimality condition for sta eq1}
\begin{array}{lll}
\Theta=\biggr\lbrace u:& -x-t\leq  0,~x-t\leq 0,~ a_{1}s+a_{2}\xi+a_{3}\left(e^{h}\right)^{T}v \leq \varepsilon,\\
& -M_{P_{0}}^{T}Ax-e^{N}s\leq  -M_{P_{0}}^{T}y,~ Bx\leq  b,\\
& -U^{T}Ax-\xi e^{h}\leq -U^{T}y,~U^{T}Ax-\xi e^{h}\leq U^{T}y ,\\
& -U^{T}Ax-v\leq -U^{T}y,~U^{T}Ax-v\leq U^{T}y ,\\
& w_{1}-w_{2}+A^{T}M_{P_{0}}w_{4}+A^{T}U(w_{5}-w_{6}+w_{7}-w_{8})-B^{T}w_{9}=0,\\
& w_{1}+w_{2}\leq e,~ -a_{1}w_{3}+(e^{N})^{T}w_{4}\leq 0,~ (t,s,\xi,v,w)\geq 0,\\
& -a_{2}w_{3}+(e^{h})^{T}(w_{5}+w_{6})\leq 0,~-a_{3}e^{h}w_{3}+w_{7}+w_{8}\leq 0,\\
& e^{T}t=-\varepsilon w_{3}+y^{T}M_{P_{0}}w_{4}+y^{T}U(w_{5}-w_{6}+w_{7}-w_{8})-b^{T}w_{9}\biggr\rbrace.
\end{array}
\end{equation*}
\end{lemma}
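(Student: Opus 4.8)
The plan is to recognize the claimed characterization as a direct instance of the strong duality theorem for linear programming applied to the primal--dual pair \eqref{Ps4}--\eqref{Psdual}. Observe first that the set $\Theta$ is nothing but the intersection of three groups of constraints: the first four lines describe \emph{primal feasibility} of \eqref{Ps4} (with $-x-t\le 0,\ x-t\le 0$ encoding $|x|\le t$; the $M_{P_0}$ line encoding $M_{P_0}^{T}(y-Ax)\le se^{N}$; the two $\xi e^{h}$ lines encoding $\|\phi(x)\|_{\infty}\le\xi$; the two $v$ lines encoding $|\phi(x)|\le v$; and $Bx\le b$ together with the sign and budget constraints); the middle lines describe \emph{dual feasibility} of \eqref{Psdual}; and the final line $e^{T}t=-\varepsilon w_{3}+y^{T}M_{P_{0}}w_{4}+y^{T}U(w_{5}-w_{6}+w_{7}-w_{8})-b^{T}w_{9}$ forces the primal and dual objective values to coincide.

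First I would record the equivalences \eqref{Ps2} $\Leftrightarrow$ \eqref{Ps3} $\Leftrightarrow$ \eqref{Ps4}: a vector $x^{*}$ solves \eqref{Ps2} if and only if there exist $t^{*},s^{*},\xi^{*},v^{*}$ such that $(x^{*},t^{*},s^{*},\xi^{*},v^{*})$ solves \eqref{Ps4}. For the forward direction one takes $t^{*}=|x^{*}|$, so that $e^{T}t^{*}=\|x^{*}\|_{1}=\theta^{*}_{P_{0}}$, while the remaining constraints of \eqref{Ps4} are exactly the feasibility constraints on $(x,s,\xi,v)$ appearing in \eqref{Ps2}; conversely any feasible point of \eqref{Ps2} lifts to a feasible point of \eqref{Ps4} of the same objective value by setting $t=|x|$, and since $e^{T}t\ge\|x\|_{1}$ always, no optimal value is gained or lost. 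I would also note that \eqref{Ps4} is feasible --- this follows from the assumed feasibility of the original problem \eqref{Ps} together with $\mathcal{B}\subseteq P_{0}$ --- and that its objective $e^{T}t$ is bounded below by $0$, so \eqref{Ps4} attains its minimum and, by the LP duality theorem, so does \eqref{Psdual}, with equal optimal values.

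Next I would establish the two implications. For necessity, if $x^{*}$ solves \eqref{Ps2}, choose $(t^{*},s^{*},\xi^{*},v^{*})$ making $(x^{*},t^{*},s^{*},\xi^{*},v^{*})$ optimal for \eqref{Ps4} and choose $w^{*}$ optimal for \eqref{Psdual}; then $u^{*}=(x^{*},t^{*},s^{*},\xi^{*},v^{*},w^{*})$ satisfies every defining constraint of $\Theta$, since primal and dual feasibility are immediate and the objective identity in the last line of $\Theta$ is precisely the equality of the two optimal values. For sufficiency, if $u^{*}\in\Theta$ then $(x^{*},t^{*},s^{*},\xi^{*},v^{*})$ is feasible for \eqref{Ps4}, $w^{*}$ is feasible for \eqref{Psdual}, and their objective values agree; by weak duality every feasible value of \eqref{Ps4} is at least every feasible value of \eqref{Psdual}, so such a pair must consist of optimal solutions, and hence $x^{*}$ is optimal for \eqref{Ps2} by the equivalence above.

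The only genuinely delicate point is bookkeeping: one must match, line by line, the rows of $\Theta$ against the constraints of \eqref{Ps4} and of \eqref{Psdual} --- in particular the signs in the dual equality constraint $w_{1}-w_{2}+A^{T}M_{P_{0}}w_{4}+A^{T}U(w_{5}-w_{6}+w_{7}-w_{8})-B^{T}w_{9}=0$ and in the objective row --- and verify that nothing is lost when \eqref{Ps3}--\eqref{Ps4} split $\|\phi(x)\|_{\infty}\le\xi$ into $|\phi(x)|\le\xi e^{h}$ and rewrite $r\in sP_{0}$ as $M_{P_{0}}^{T}(y-Ax)\le se^{N}$. Once this identification is in place, the statement reduces to the standard fact that a primal-feasible point and a dual-feasible point with equal objective values are both optimal, so no further work is required.
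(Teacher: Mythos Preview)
Your proposal is correct and matches the paper's approach: the paper simply states that ``the optimality condition yields the following lemma'' without further elaboration, and what you have written is precisely the LP strong-duality argument that justifies this remark. Your identification of $\Theta$ as (primal feasibility of \eqref{Ps4}) $\cap$ (dual feasibility of \eqref{Psdual}) $\cap$ (equality of objective values), together with the standard weak/strong duality reasoning, is exactly what is meant.
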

Clearly, $\vert x^{*}\vert=t^{*}$  holds for every $u^{*}\in \Theta$.  The set $\Theta$ can be written as the form
\begin{equation}\label{Pi set}
\Theta=\left\{ u: M'_{1}u\leq p',~M'_{2}u= q'\right\},
\end{equation}
where the vectors  $q'=0$ and
 \begin{multline*}\label{pq}
  p'=\left[\begin{array}{cccccccccccccc}
0&0&\varepsilon&-M_{P_{0}}^{T}y&b&-U^{T}y&U^{T}y&-U^{T}y&U^{T}y&e&0&0&0&0
\end{array}\right.\\
\left.\begin{array}{cccccccccccc}
0&0&0&0&0&0&0&0&0&0&0&0
\end{array}\right]^{T}.
\end{multline*}
 The matrices $M'_{1}$ and $M'_{2}$ in \eqref{Pi set}  are given as follows:
\begin{equation}\label{M_{1}}
 M_{1}'=\left[ \begin{array}{cc}
D^{1}&0  \\
0&D^{2} \\
D^{3}&0\\
0& -\widetilde{I}  \\
\end{array} \right],~M_{2}'=\left[ \begin{array}{cc}
M_{*}&M_{**} \\
\end{array} \right],
\end{equation}
\noindent  where the matrices $M_{*}$, $M_{**}$, $D^{1}$, $D^{2}$ and $D^{3}$ and $\widetilde{I}$ are given as follows:
\begin{subequations}
\begin{equation*}\label{M_{2}}
    M_{*}=\left[ \begin{array}{cccccccc}
0&0& 0& 0&0&I&-I& 0\\
0&e^{T}& 0& 0&0&0&0& \epsilon\\
\end{array} \right],
\end{equation*}
\begin{equation*}
M_{**}=\left[ \begin{array}{cccccc}
 A^{T}M_{P_{0}}&A^{T}U&-A^{T}U&A^{T}U& -A^{T}U& -B^{T}\\
 -y^{T}M_{P_{0}}&-y^{T}U&y^{T}U&-y^{T}U& y^{T}U& b^{T}\\
\end{array} \right],
\end{equation*}
\end{subequations}
$$ D^{1}=\left[ \begin{array}{ccccc}
-I&-I& 0& 0&0\\
I&-I& 0& 0&0\\
0&0& a_{1}& a_{2}&a_{3}e^{T}\\
\tiny{-M_{P_{0}}^{T}A}&0& -e^{N}& 0&0\\
B&0& 0& 0&0\\
-U^{T}A&0& 0& -e^{h}&0\\
U^{T}A&0& 0& -e^{h}&0\\
-U^{T}A&0& 0& 0&-I^{h}\\
U^{T}A&0& 0& 0&-I^{h}\\
\end{array} \right],~D^{3}=\left[ \begin{array}{ccccc}
0&-I& 0& 0&0 \\
0&0& -1& 0&0\\
0&0& 0& -1&0\\
0&0& 0& 0&-I^{h}\\
\end{array} \right], $$
$$D^{2}=\left[ \begin{array}{ccccccccc}
  I&I& 0& 0&0&0&0& 0& 0\\
  0&0& -a_{1}& (e^{N})^{T}&0&0&0& 0& 0\\
   0&0& -a_{2}& 0&(e^{h})^{T}&(e^{h})^{T}&0& 0& 0\\
  0&0& -a_{3}e^{h}& 0&0&0&I^{h}& I^{h}& 0\\
  \end{array} \right],~\widetilde{I}=I^{2n+1+N+4h+l}.$$
In the above matrices, $0$'s  are zero matrices with suitable sizes and  $I$,  $I^{h}$ and $\widetilde{I}$ are the $n\times n$, $h\times h$ and $(2n+1+N+4h+l)\times (2n+1+N+4h+l)$ identity matrices, respectively.

To prove the main stability result, we also need  the next  two Lemmas.

\begin{lemma}[Hoffman \cite{hoffman,robinson1973}]\label{hoffman theorem}
Let $M_{1}\in R^{m\times n}$ and $M_{2}\in R^{l\times n}$ be two given matrices and the set  $\mathcal{Q}$ be given as
\begin{equation*}\label{set Q}
\mathcal{Q}=\lbrace x\in R^{n}: M_{1}x\leq p, ~M_{2}x=q\rbrace.
\end{equation*}
For any vector $x\in R^{n}$, there exists a vector $x^{*}\in \mathcal{Q}$ satisfying
$$\left\|x-x^{*}\right\|_{2}\leq \sigma\left(M_{1}, M_{2}\right)\left\| \left[ \begin{array}{c}
\left(M_{1}x-p\right)^{+}\\
M_{2}x-q
\end{array} \right] \right\|_{1},$$
where $\sigma(M_{1},M_{2})$ is a constant determined by $M_{1}$ and $M_{2}$.
\end{lemma}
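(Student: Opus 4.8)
\emph{Proof plan.} This is Hoffman's classical error bound (Hoffman \cite{hoffman}, refined by Robinson \cite{robinson1973}), so one option is simply to invoke it; a self-contained argument, which also produces an admissible $\sigma(M_1,M_2)$ depending only on the two matrices, runs as follows (throughout, $\mathcal Q\neq\emptyset$, as implicit in the statement). First I would remove the equalities: let $\widetilde M$ be the matrix stacking $M_1$, $M_2$ and $-M_2$ in that order, and put $\widetilde d=(p^T,q^T,-q^T)^T$, so that $\mathcal Q=\{x\in R^n:\widetilde Mx\le\widetilde d\}$ and, componentwise, $\left\|\,[(M_1x-p)^+;\ M_2x-q]\,\right\|_1=\big\|(\widetilde Mx-\widetilde d)^+\big\|_1$. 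Hence it suffices to find $C=C(\widetilde M)$ with $\operatorname{dist}_2(x,\mathcal Q)\le C\,\big\|(\widetilde Mx-\widetilde d)^+\big\|_1$ for all $x\in R^n$.

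Fix $x\notin\mathcal Q$ (otherwise take $x^*=x$) and let $x^*$ be the Euclidean projection of $x$ onto the nonempty closed convex polyhedron $\mathcal Q$. Let $I=\{i:\widetilde M_ix^*=\widetilde d_i\}$ be the active set at $x^*$, with $\widetilde M_i$ the $i$th row of $\widetilde M$. The structural input I would use is that $x-x^*$ lies in the normal cone of $\mathcal Q$ at $x^*$, which for a polyhedron equals the conical hull of $\{\widetilde M_i^T:i\in I\}$; thus $x-x^*=\sum_{i\in I}\lambda_i\widetilde M_i^T$ with all $\lambda_i\ge0$. By Carath\'eodory's theorem I may keep only a subset $I'\subseteq I$ of indices carrying $\lambda_i>0$ for which the rows $\{\widetilde M_i:i\in I'\}$ are linearly independent, so $x-x^*=\widetilde M_{I'}^T\lambda_{I'}$ with $\widetilde M_{I'}$ of full row rank and $\lambda_{I'}=(\widetilde M_{I'}\widetilde M_{I'}^T)^{-1}\widetilde M_{I'}(x-x^*)$.

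Then I would estimate: using $\widetilde M_ix^*=\widetilde d_i$ and $\lambda_i>0$ for $i\in I'$,
\[
\|x-x^*\|_2^2=(x-x^*)^T(x-x^*)=\sum_{i\in I'}\lambda_i\big(\widetilde M_ix-\widetilde d_i\big)\le\sum_{i\in I'}\lambda_i\big(\widetilde M_ix-\widetilde d_i\big)^+\le\|\lambda_{I'}\|_2\,\big\|(\widetilde Mx-\widetilde d)^+\big\|_1 .
\]
Since $\|\lambda_{I'}\|_2\le\big\|(\widetilde M_{I'}\widetilde M_{I'}^T)^{-1}\widetilde M_{I'}\big\|_{2\to2}\,\|x-x^*\|_2$, dividing through by $\|x-x^*\|_2>0$ gives $\|x-x^*\|_2\le\big\|(\widetilde M_{I'}\widetilde M_{I'}^T)^{-1}\widetilde M_{I'}\big\|_{2\to2}\,\big\|(\widetilde Mx-\widetilde d)^+\big\|_1$. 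Finally I would put
\[
\sigma(M_1,M_2):=\max\big\{\,\|(\widetilde M_J\widetilde M_J^T)^{-1}\widetilde M_J\|_{2\to2}\ :\ J\ \text{a row index set of }\widetilde M\text{ with }\widetilde M_J\text{ of full row rank}\,\big\},
\]
which is finite (finitely many $J$) and dominates the coefficient above for every $x$; this is the asserted bound.

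The one step needing genuine care — and the reason $\sigma$ does not depend on $p,q$ — is this last maximum. Although $x^*$, the active set $I$, and the reduced set $I'$ all move with $x$, hence with $(p,q)$, the matrices $\widetilde M_{I'}$ that can ever occur range over the \emph{fixed, finite} family of full-row-rank row submatrices of $[M_1;M_2;-M_2]$, and maximising $\|(\widetilde M_J\widetilde M_J^T)^{-1}\widetilde M_J\|_{2\to2}$ over that family is precisely what severs the dependence on $(p,q)$. The remaining ingredients — the normal-cone description of a polyhedron and the Carath\'eodory reduction to a linearly independent active subsystem — are standard.
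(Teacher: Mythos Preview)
Your argument is correct: the reduction to a pure inequality system via $\widetilde M=[M_1;M_2;-M_2]$ gives $\|(\widetilde Mx-\widetilde d)^+\|_1=\|(M_1x-p)^+\|_1+\|M_2x-q\|_1$; the normal-cone description of a polyhedron plus the conical Carath\'eodory theorem yield the linearly independent active subsystem; and the chain $\|x-x^*\|_2^2=\sum_{i\in I'}\lambda_i(\widetilde M_ix-\widetilde d_i)\le\|\lambda_{I'}\|_2\|(\widetilde Mx-\widetilde d)^+\|_1$ together with $\|\lambda_{I'}\|_2\le\|(\widetilde M_{I'}\widetilde M_{I'}^T)^{-1}\widetilde M_{I'}\|_{2\to2}\|x-x^*\|_2$ closes the loop. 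Your observation that the dependence on $(p,q)$ disappears because one maximises over the finite family of full-row-rank row submatrices of $\widetilde M$ is exactly the point.

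As for comparison with the paper: the paper does not prove this lemma at all. It is quoted as a classical result of Hoffman and Robinson and used as a black box in the proof of Theorem~4.4, where only the existence of some constant $\sigma(M_1',M_2')$ is needed. So you have gone beyond what the paper does by supplying a self-contained derivation and an explicit (Robinson-type) formula for an admissible $\sigma(M_1,M_2)$. That is a genuine addition rather than a different route to the same destination.
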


\noindent The constant $\sigma(M_{1},M_{2})$ is also called the    Robinson constant.  We also use the following lemma in the proof of the main  result in this section.

\begin{lemma} [\cite{zhaobook2018,zhaolistability}]\label{operator lemma}
 Let $\pi_{S}(x)$ be the projection of $x$ into the convex set $S$, i.e., $\pi_{S}(x)=\arg \min_{z\in S} \Vert x-z\Vert_{2}.$
 Let the three convex compact sets $T_{1}$, $T_{2}$ and $T_{3}$ satisfy that $T_{1}\subseteq T_{2}$ and $T_{3}\subseteq T_{2}. $ Then for any $x\in R^{n}$ and any $z\in T_{3}$ the following holds:
\begin{equation*}\label{operator eq1}
\left\|x-\pi_{T_{1}}(x)\right\|_{2}\leq \delta^{\mathcal{H}}\left(T_{1}, T_{2}\right)+2\left\| x-z\right\|_{2}.
\end{equation*}
\end{lemma}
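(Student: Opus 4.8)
The plan is to reduce the estimate to two elementary ingredients: the closest-point characterization of the metric projection $\pi_{T_{1}}$, and the fact that, under $T_{1}\subseteq T_{2}$, the Hausdorff distance $\delta^{\mathcal H}(T_{1},T_{2})$ equals the one-sided excess $\sup_{u\in T_{2}}\inf_{v\in T_{1}}\|u-v\|_{2}$ and therefore bounds the distance from any point of $T_{2}$ to $T_{1}$. The link between $x$ and this quantity will be furnished by the point $z$: since $z\in T_{3}\subseteq T_{2}$, it is in particular a point of $T_{2}$.

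First I would fix $x\in R^{n}$ and $z\in T_{3}$, and introduce the auxiliary point $w:=\pi_{T_{1}}(z)$, which is well defined because $T_{1}$ is nonempty, convex and compact. By the definition of the projection and the inclusion $z\in T_{2}$,
\begin{equation*}
\|z-w\|_{2}=\inf_{v\in T_{1}}\|z-v\|_{2}\leq \sup_{u\in T_{2}}\inf_{v\in T_{1}}\|u-v\|_{2}\leq \delta^{\mathcal H}(T_{1},T_{2}).
\end{equation*}
Then, since $w\in T_{1}$ and $\pi_{T_{1}}(x)$ is the point of $T_{1}$ nearest to $x$, I would use $\|x-\pi_{T_{1}}(x)\|_{2}\leq\|x-w\|_{2}$, split this through the triangle inequality as $\|x-w\|_{2}\leq\|x-z\|_{2}+\|z-w\|_{2}$, and combine with the previous display to obtain
\begin{equation*}
\|x-\pi_{T_{1}}(x)\|_{2}\leq \delta^{\mathcal H}(T_{1},T_{2})+\|x-z\|_{2}\leq \delta^{\mathcal H}(T_{1},T_{2})+2\|x-z\|_{2},
\end{equation*}
which is exactly the asserted bound (in fact with the sharper constant $1$ multiplying $\|x-z\|_{2}$).

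Since this chain of inequalities is short, the only delicate point is conceptual rather than computational: one must use the Hausdorff distance in the correct direction --- it is the excess of $T_{2}$ over $T_{1}$, not the reverse, that controls $\inf_{v\in T_{1}}\|z-v\|_{2}$, and the hypothesis $T_{1}\subseteq T_{2}$ is what makes $\delta^{\mathcal H}(T_{1},T_{2})$ coincide with that one-sided excess. Apart from this, compactness (in fact closedness and convexity) of $T_{1}$ is invoked only to guarantee that the projections $\pi_{T_{1}}(z)$ and $\pi_{T_{1}}(x)$ exist, while $T_{3}$ enters solely as a source of the point $z\in T_{2}$; the inclusions $T_{1}\subseteq T_{2}$ and $T_{3}\subseteq T_{2}$ are precisely what allow the single Hausdorff term between $T_{1}$ and $T_{2}$ to absorb the distance from $z$ to $T_{1}$.
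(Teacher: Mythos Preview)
Your argument is correct, and in fact yields the sharper bound $\|x-\pi_{T_{1}}(x)\|_{2}\leq \delta^{\mathcal H}(T_{1},T_{2})+\|x-z\|_{2}$, as you note. The paper itself does not supply a proof of this lemma: it is quoted from \cite{zhaobook2018,zhaolistability} and invoked as a black box, so there is no in-paper argument to compare against. Your route---project $z$ onto $T_{1}$, bound that distance by the one-sided excess of $T_{2}$ over $T_{1}$ (which equals $\delta^{\mathcal H}(T_{1},T_{2})$ because $T_{1}\subseteq T_{2}$), then use the minimality of $\pi_{T_{1}}(x)$ and a single triangle inequality---is the natural one and would be the expected proof in the cited sources as well. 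Your remark that the hypothesis $T_{3}\subseteq T_{2}$ is used only to place $z$ in $T_{2}$, and that compactness of $T_{1}$ is needed only for existence of the projection, is accurate.
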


 We also define two types of  constants. Let
\begin{equation}\label{matrix C}
C=\left[ A^{T}, B^{T}\right]^{T}=\left[ \begin{array}{c}
A\\B
\end{array} \right]
\end{equation}
be a matrix with full row rank. Given three positive numbers $c,d,\widehat{d}\in [1, \infty]$, we define the constants $\Upsilon(d,\widehat{d})$ and $\vartheta(c)$ as follows:
\begin{subequations}
\begin{equation}\label{parameter c}
     \Upsilon(d,\widehat{d})=\max_{\mho\subseteq \lbrace 1,...,h\rbrace,\vert \mho\vert=m} \left\| U_{\mho}^{-1}\right\|_{\widehat{d}\rightarrow d} \left\| \left(CC^{T}\right)^{-1}C\right\|_{\infty\rightarrow \widehat{d}},
\end{equation}
\begin{equation}\label{parameter var}
 \vartheta(c)=\left\| \left(CC^{T}\right)^{-1}C\right\|_{\infty\rightarrow c}.
\end{equation}
\end{subequations}
We will use the above constants together with the specific constants $\Upsilon(1,1), \Upsilon(\infty,\infty)$ and $\vartheta(1)$ in the stability analysis  of \eqref{Ps}. The main  result is given as follows.

\begin{theorem}\label{main stability theorem}
Let the problem data $(U, A,B,\varepsilon,a_{1},a_{2},a_{3},b,y)$ of \eqref{Ps} be given, and the matrix  $C\in R^{(m+l)\times n}$ be given in \eqref{matrix C}  with full row rank.   Let  $P_{0}$ be  the polytope given in \eqref{polytope appximated by B} satisfying \eqref{final polytope}. If $\left(A^{T}, B^{T}\right)$ satisfies the restricted weak $\mathrm{RSP}$ of order $k$, then
for any $x\in R^{n}$, there is an optimal  solution $x^{*}$ of \eqref{Ps} satisfying the  bound
\begin{equation}\label{main stability theorem eq1}
\begin{array}{lll}
\left\| x-x^{*}\right\|_{2} & \leq &  \varepsilon'+2\sigma'\biggr\lbrace 2\sigma_{k}(x)_{1}+\varepsilon \hat{\Upsilon}+\left\|(Bx-  b)^{+}\right\|_{1}+
+\left\|Bx-b\right\|_{c'}\vartheta(c)+\\
&&\left\| \phi(x) \right\|_{d'}\Upsilon(d,\widehat{d})+\left(a_{1}\left\|y-Ax\right\|_{2}+a_{2}\left\| \phi(x)\right\|_{\infty}+a_{3}\left\| \phi(x)\right\|_{1} - \varepsilon\right)^{+}
\biggr\rbrace.
\end{array}
\end{equation}
where $\sigma'$ is the Robinson constant determined by  $(M'_{1},M'_{2})$ in \eqref{M_{1}},
$\Upsilon(d,\widehat{d})$ and $\vartheta(c)$ are the constants given in \eqref{parameter c} and \eqref{parameter var}, and $\hat{\Upsilon}=\max\lbrace \Upsilon(1,1),\Upsilon(\infty,\infty), \vartheta(1)\rbrace. $  $\widehat{d},d,c,d',c'\in[1,+\infty]$ are five given  positive numbers (allowing to be $\infty$) satisfying
\begin{equation}\label{dce}
\frac{1}{c}+\frac{1}{c'}=1~\mathrm{and} ~\frac{1}{d}+\frac{1}{d'}=1.
\end{equation}
In particular, if $x$ is a feasible solution of \eqref{Ps}, then there is an optimal solution $x^{*}$ of \eqref{Ps} such that
\begin{equation}\label{main stability theorem eq2}
\begin{array}{lll}
\left\| x-x^{*}\right\|_{2}  \leq   \varepsilon'+2\sigma'\biggr\lbrace \varepsilon \hat{\Upsilon}+ 2\sigma_{k}(x)_{1}+\left\| \phi(x) \right\|_{d'}\Upsilon(d,\widehat{d})+\left\|Bx-b\right\|_{c'}\vartheta(c)
\biggr\rbrace.
\end{array}
\end{equation}
\end{theorem}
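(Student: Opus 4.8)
The plan is to apply Hoffman's lemma (Lemma~\ref{hoffman theorem}) to the polyhedron $\Theta=\{u:M_{1}'u\le p',\ M_{2}'u=q'\}$ of Lemma~\ref{optimility condiyion for stability} --- which is precisely the set of optimal primal--dual vectors of the polytope approximation \eqref{Ps2} --- and then to push the resulting estimate from \eqref{Ps2} back to \eqref{Ps} via Lemma~\ref{lemma approximat level} and Lemma~\ref{operator lemma}. The crux is to produce, for the given $x\in R^{n}$, one vector $u=(x,t,s,\xi,v,w)$ whose residual $\big\|[(M_{1}'u-p')^{+};\,M_{2}'u-q']\big\|_{1}$ is dominated by the braced quantity $\Delta$ on the right-hand side of \eqref{main stability theorem eq1}; Lemma~\ref{hoffman theorem} then gives $\bar u=(\bar x,\dots)\in\Theta$ with $\|x-\bar x\|_{2}\le\|u-\bar u\|_{2}\le\sigma'\Delta$, and Lemma~\ref{optimility condiyion for stability} says $\bar x$ solves \eqref{Ps2}, i.e. $\bar x\in\Omega^{*}_{P_{0}}$.

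On the primal side I would set $t=|x|$, $s=\|y-Ax\|_{2}$, $\xi=\|\phi(x)\|_{\infty}$, $v=|\phi(x)|$; since each column of $M_{P_{0}}$ is a unit vector, $M_{P_{0}}^{T}(y-Ax)\le se^{N}$ holds, so among the primal rows of $M_{1}'u\le p'$ only $Bx\le b$ and $a_{1}s+a_{2}\xi+a_{3}(e^{h})^{T}v\le\varepsilon$ can be violated, with positive parts exactly $\|(Bx-b)^{+}\|_{1}$ and $\big(a_{1}\|y-Ax\|_{2}+a_{2}\|\phi(x)\|_{\infty}+a_{3}\|\phi(x)\|_{1}-\varepsilon\big)^{+}$. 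For the dual side, let $S$ index $k$ largest-magnitude entries of $x$ and $J_{1}=\{i\in S:x_{i}>0\}$, $J_{2}=\{i\in S:x_{i}<0\}$; the restricted weak $\mathrm{RSP}$ of order $k$ gives $\eta=A^{T}\nu+B^{T}h$ with $h\in R^{l}_{-}$, $\eta_{i}=\mathrm{sign}(x_{i})$ on $S$, $\|\eta\|_{\infty}\le1$, and since $C=[A^{T},B^{T}]^{T}$ has full row rank we take $(\nu;h)=(CC^{T})^{-1}C\eta$, so $\|(\nu;h)\|_{q}\le\|(CC^{T})^{-1}C\|_{\infty\rightarrow q}$ for every $q\in[1,\infty]$. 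Then I would put $w_{9}=-h\ge0$, $w_{1}=(-\eta)^{+}$, $w_{2}=(\eta)^{+}$ (so $w_{1}-w_{2}=-\eta$, $w_{1}+w_{2}=|\eta|\le e$); fixing an $m$-subset $\mho\subseteq\{1,\dots,h\}$ with $U_{\mho}$ invertible, realise $a_{1}\nu$ through the $\pm\beta^{j}$ columns of $M_{P_{0}}$ (yielding $w_{4}\ge0$, $\|w_{4}\|_{1}=a_{1}\|\nu\|_{1}$) and split $a_{2}U_{\mho}^{-1}\nu$ and $a_{3}U_{\mho}^{-1}\nu$ into positive and negative parts to get $w_{5},w_{6}\ge0$ and $w_{7},w_{8}\ge0$, so that $M_{P_{0}}w_{4}+U(w_{5}-w_{6}+w_{7}-w_{8})=\nu$; finally $w_{3}=\hat{\Upsilon}$. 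Using $\|\nu\|_{1}\le\vartheta(1)$, $\|U_{\mho}^{-1}\nu\|_{1}\le\Upsilon(1,1)$ and $\|U_{\mho}^{-1}\nu\|_{\infty}\le\Upsilon(\infty,\infty)$ one checks that the dual feasibility equation and all dual inequality rows hold exactly (when some $a_{i}=0$ the corresponding block is simply zero), so the only nonzero residual is the strong-duality row of $\Theta$. Substituting $e^{T}t=\|x\|_{1}$, $y^{T}M_{P_{0}}w_{4}+y^{T}U(w_{5}-w_{6}+w_{7}-w_{8})=\nu^{T}y$, $-b^{T}w_{9}=b^{T}h$ and $\nu^{T}Ax+h^{T}Bx=\eta^{T}x$ turns that residual into $(\|x\|_{1}-\eta^{T}x)+\varepsilon w_{3}+\nu^{T}(Ax-y)+h^{T}(Bx-b)$; by $0\le\|x\|_{1}-\eta^{T}x\le2\sigma_{k}(x)_{1}$, by $\nu^{T}(Ax-y)=(U_{\mho}^{-1}\nu)^{T}\phi(x)$ (using $\nu=U_{\mho}(U_{\mho}^{-1}\nu)\in\mathcal{R}(U)$) so that $|\nu^{T}(Ax-y)|\le\Upsilon(d,\widehat{d})\|\phi(x)\|_{d'}$, by $|h^{T}(Bx-b)|\le\vartheta(c)\|Bx-b\|_{c'}$, and by $\varepsilon w_{3}=\varepsilon\hat{\Upsilon}$, its absolute value is $\le2\sigma_{k}(x)_{1}+\varepsilon\hat{\Upsilon}+\Upsilon(d,\widehat{d})\|\phi(x)\|_{d'}+\vartheta(c)\|Bx-b\|_{c'}$.

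Adding all residuals gives $\|u-\bar u\|_{2}\le\sigma'\Delta$ with $\Delta$ the braced expression in \eqref{main stability theorem eq1}. Finally, I would apply Lemma~\ref{operator lemma} with $T_{1}$ the optimal solution set of \eqref{Ps} (the $x$-component of $\Omega^{*}$), $T_{2}$ the $x$-component of $\Omega_{P_{0}}$, and $T_{3}=\Omega^{*}_{P_{0}}$, which satisfy $T_{1}\subseteq T_{2}$, $T_{3}\subseteq T_{2}$ and $\delta^{\mathcal{H}}(T_{1},T_{2})\le\delta^{\mathcal{H}}(\Omega^{*},\Omega_{P_{0}})\le\varepsilon'$ (Lemma~\ref{lemma approximat level}); taking $z=\bar x\in T_{3}$ produces an optimal solution $x^{*}:=\pi_{T_{1}}(x)$ of \eqref{Ps} with $\|x-x^{*}\|_{2}\le\varepsilon'+2\|x-\bar x\|_{2}\le\varepsilon'+2\sigma'\Delta$, which is \eqref{main stability theorem eq1}; for $x$ feasible in \eqref{Ps} the two positive-part terms of $\Delta$ vanish, giving \eqref{main stability theorem eq2}. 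The main obstacle is the dual construction in the second paragraph: representing $\nu$ simultaneously through $M_{P_{0}}$ and $U_{\mho}$ while respecting the sign restrictions $w_{4}\ge0$, $w_{5},\dots,w_{8}\ge0$, $w_{9}\in R^{l}_{+}$ and the coupling inequalities with $w_{3}$, and verifying them uniformly across the norm regimes $(1,1)$, $(\infty,\infty)$, $(d,\widehat{d})$; the Hoffman/Hausdorff wrap-up is then routine.
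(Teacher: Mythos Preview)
Your proposal is correct and follows essentially the same route as the paper's proof: build a candidate $u=(x,t,s,\xi,v,w)$, apply Hoffman's lemma to the polyhedron $\Theta$, bound the single nonzero residual (the strong-duality row) via the restricted weak RSP certificate and H\"older, then use Lemma~\ref{operator lemma} together with Lemma~\ref{lemma approximat level} to pass from $\Omega^{*}_{P_{0}}$ to $\Omega^{*}$. Your specific choices differ only cosmetically from the paper's (you take $s=\|y-Ax\|_{2}$ instead of $s=\|M_{P_{0}}^{T}(y-Ax)\|_{\infty}$, $w_{3}=\hat\Upsilon$ instead of $w_{3}=\max\{\|\nu\|_{1},\|g\|_{1},\|g\|_{\infty}\}$, and $w_{1}=(-\eta)^{+}$, $w_{2}=(\eta)^{+}$ instead of the paper's convex splitting on $J_{3}$), and each variant yields the same bounds; your observation that $(\nu;h)$ is \emph{forced} to equal $(CC^{T})^{-1}C\eta$ because $C$ has full row rank is exactly what the paper uses implicitly. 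One notational point: when you write $(U_{\mho}^{-1}\nu)^{T}\phi(x)$ and ``split $a_{2}U_{\mho}^{-1}\nu$'', bear in mind that $w_{5},\dots,w_{8}\in R^{h}$ while $U_{\mho}^{-1}\nu\in R^{m}$, so you should make explicit (as the paper does with its vector $g$) that you pad by zeros on $\bar\mho$; the estimates are unaffected.
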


\begin{proof}
Let $x$ be any given vector in $R^{n}$ and $P_{0}$ be the fixed polytope given in \eqref{polytope appximated by B} satisfying \eqref{final polytope} in Lemma \ref{lemma approximat level}.  We  let $(t,s,\xi,v)$ satisfy that
\begin{equation}\label{main stability theorem eq4}
t=\vert x\vert,~s=\left\| (M_{P_{0}})^{T}(y-Ax)\right\|_{\infty},~
\xi=\left\| U^{T}(y-Ax)\right\|_{\infty},~ v=\left| U^{T}(y-Ax)\right|.
\end{equation}
With such a choice of $(t,s,\xi,\nu)$,  we have
\begin{equation}\label{main stability theorem eq8}
\begin{array}{lll}
\left(-x-t\right)^{+}=0,~\left(x-t\right)^{+}=0,~
\left(M_{P_{0}}^{T}(y-Ax)-e^{N}s\right)^{+}=0,\\
\left(U^{T}(y-Ax)-\xi e^{h}\right)^{+}=0,~\left(-U^{T}(y-Ax)-\xi e^{h}\right)^{+}=0,\\
\left(U^{T}(y-Ax)-v\right)^{+}=0,~\left(-U^{T}(y-Ax)-v\right)^{+}=0.
\end{array}
\end{equation}
Let $J$ be the support set of $k$ largest absolute entries of $x$, and $J_{1}$ and $J_{2}$ be the sets such that
$$J_{1}=\lbrace i:~x_{i}>0,~i\in J\rbrace,~J_{2}=\lbrace  i:~x_{i}<0,~i\in J\rbrace.$$
Clearly, $\vert J_{1}\cup J_{2}\vert=\vert J \vert=\vert J_{1}\vert+\vert J_{2}\vert\leq k$. Let $J_{3}$ be the complementary set of $J$. Clearly,  $J_{1}$, $J_{2}$ and $J_{3}$ are disjoint.  Under the assumption of  restricted weak RSP of order $k$, there exists a vector $\eta\in R\left(A^{T},B^{T}\right)$ such that $\eta=A^{T}\nu^{*}+B^{T}h^{*}$ for some $\nu^{*}\in R^{m}$ and $h^{*}\in R^{l}_{-}$ satisfying
\begin{equation}\label{main stability theorem eq5}
\eta_{i}=1~\mathrm{for}~i\in J_{1};~\eta_{i}=-1~\mathrm{for}~i\in J_{2};~\left| \eta_{i}\right|\leq 1~\mathrm{for}~ i\in J_{3}.
\end{equation}
Now we construct  a feasible solution  $w=(w_{1},...,w_{9})$ to the dual problem  \eqref{Psdual}.
\vskip 0.15in
\noindent \underline{Constructing ($w_{1},w_{2}$)}. Set $w_{1}$ and $w_{2}$ as follows:
\begin{equation*}\label{w1w2}
\left\{\begin{matrix}
(w_{1})_{i}=0,~ (w_{2})_{i}=1, & i\in J_{1};\\
 (w_{1})_{i}=1,~ (w_{2})_{i}=0, & i\in J_{2};\\
 (w_{1})_{i}=\frac{1-\eta_{i}}{2},~ (w_{2})_{i}=\frac{1+\eta_{i}}{2}, & i\in J_{3}.
\end{matrix}\right.
\end{equation*}
Such $w_{1}$ and $w_{2}$ satisfy that
\begin{equation}\label{choice w result1}
w_{1}+w_{2}\leq e,~w_{2}-w_{1}=\eta, ~w_{1},w_{2}\geq 0.
\end{equation}
\underline{Constructing ($w_{5}$--$w_{8}$)}. Note that $U$ is a matrix with  full row rank. There must exist an invertible $m\times m$ matrix of $U$, denoted by $U_{\mho}$, where $\mho\subseteq  \lbrace 1,...,h\rbrace$ with $\vert \mho\vert=m$. Denote the complementary set of $\mho$ by $\bar{\mho}=\lbrace 1,...,h\rbrace \setminus \mho.$ Then we construct a vector $g\in R^{h}$ satisfying $g_{\mho}=U^{-1}_{\mho}\nu^{*} $ and $ g_{\bar{\mho}}=0,$ which imply that
\begin{equation}\label{Mg}
Ug=\nu^{*}.
\end{equation}
Let $g^{+}$ ($g^{-}$) be the vector obtained by keeping the  positive (negative) components of $g$ and setting the remaining components to $0$. By using the vector $g$, $w_{5}$--$w_{8}$ can be constructed as follows:
\begin{equation}\label{w5678}
w_{5}=a_{2}g^{+}, ~w_{6}=-a_{2}g^{-},~ w_{7}=a_{3}g^{+},~w_{8}=-a_{3}g^{-},
\end{equation}
which implies that
\begin{equation}\label{choice w result2}
w_{5}-w_{6}+w_{7}-w_{8}=(a_{2}+a_{3})g,~w_{5},w_{6},w_{7},w_{8}\geq 0.
\end{equation}
\underline{Constructing $w_{4}$}.  Without loss of generality, we suppose that  the first $m$ columns in $M_{P_{0}}$ are $ \beta_{j},~ j=1,...,m,$ and  $ -\beta_{j}, ~j=1,...,m$ are  the second $m$ columns of  $M_{P_{0}}$. The components of $w_{4}$ can be assigned as follows:
\begin{equation*}\label{w4}
\left\{\begin{array}{cl}
 (w_{4})_{j}=a_{1}\nu^{*}_{j}, & \mathrm{if}~~\nu^{*}_{j}> 0,~j=1,...,m;\\
(w_{4})_{j+m}=-a_{1}\nu^{*}_{j}, & \mathrm{if}~~ \nu^{*}_{j}< 0,~j=1,...,m;\\
0, &\mathrm{otherwise}.
\end{array}\right.
\end{equation*}
From this choice of $w_{4}$, we can see that
\begin{equation}\label{choice w result3}
M_{P_{0}}w_{4}=a_{1}\nu^{*}, ~\left\| w_{4}\right\|_{1}=a_{1}\left\| \nu^{*}\right\|_{1}~\mathrm{and}~w_{4}\geq 0.
\end{equation}
\underline{Constructing $w_{3}$}. Let $w_{3}=\max\left\{ \left\|\nu^{*}\right\|_{1},\left\|g\right\|_{1},\left\|g\right\|_{\infty}\right\}$. Such a choice of $w_{3}$ together with the choice of  $w_{4}$--$w_{8}$ implies that
\begin{equation}\label{choice w result4}
\left\{\begin{array}{lll}
\left(-a_{1}w_{3}+(e^{N})^{T}w_{4}\right)^{+}& \leq &\left(-a_{1} \left\| \nu^{*}\right\|_{1}+(e^{N})^{T}w_{4}\right)^{+}=0,\\
\left(-a_{2}w_{3}+e^{T}(w_{5}+w_{6})\right)^{+}& \leq & \left(-a_{2}\left\| g\right\|_{1}+a_{2}\left\| g\right\|_{1}\right)^{+} =0,\\
\left(-a_{3}ew_{3}+w_{7}+w_{8}\right)^{+}&\leq &\left(-a_{3}e\left\| g\right\|_{\infty}+a_{3} \vert g\vert\right)^{+}=0.
\end{array} \right.
\end{equation}
\underline{Constructing $w_{9}$}. Let $w_{9}=-h^{*}$. Clearly,  $w_{9}\geq 0$ due to  $h^{*}\leq 0$.
\vskip 0.15in
With the above choice of  $w$, we deduce from \eqref{choice w result1}, \eqref{choice w result2}, \eqref{choice w result3} and \eqref{choice w result4} that
\begin{equation}\label{main stability theorem eq7}
\left\{\begin{array}{lll}
 w_{1}-w_{2}+A^{T}M_{P_{0}}w_{4}+A^{T}U(w_{5}-w_{6}+w_{7}-w_{8})-B^{T}w_{9}=0,\\
\left(w_{1}+w_{2}- e\right)^{+}=0,~\left(-a_{1}w_{3}+(e^{N})^{T}w_{4}\right)^{+}=0,\\
\left(-a_{2}w_{3}+e^{T}(w_{5}+w_{6})\right)^{+}=0,~\left(-a_{3}ew_{3}+w_{7}+w_{8}\right)^{+}=0,\\
t^{-}=0,~s^{-}=0,~\xi^{-}=0,~v^{-}=0,~w^{-}=0.
\end{array}\right.
\end{equation}
 Let $\mathcal{X}$ and $\mathcal{Y}$ be defined as follows:
$$\left\{
\begin{array}{l}
     \mathcal{X}=e^{T}t+\varepsilon w_{3}-y^{T}M_{P_{0}}w_{4}-y^{T}U(w_{5}-w_{6}+w_{7}-w_{8})+b^{T}w_{9},\\
\mathcal{Y}= \left(a_{1}s+a_{2}\xi+a_{3}e^{T}v - \varepsilon\right)^{+}.
\end{array}
\right.
$$
For the vector $u=(x, t,s,\xi,\nu,w)$ where  $(t,s,\xi,\nu,w)$ is constructed above, by Lemma  \ref{hoffman theorem},  there exists a vector $\hat{u}\in \Theta, $ where $\Theta$ is given in Lemma \ref{optimility condiyion for stability} and written as \eqref{Pi set}, such that
\begin{equation}\label{main stability theorem eq3}
\left\|u- \hat{u} \right\|_{2}\leq \sigma'\left\| \left[ \begin{array}{c}
\mathcal{X}\\
\mathcal{Y}\\
(Bx-  b)^{+}\\
(-x-t)^{+}\\
(x-t)^{+}\\
\left(M_{P_{0}}^{T}(y-Ax)-e^{N}s\right)^{+}  \\
\left(U^{T}(y-Ax)-\xi e^{h}\right)^{+} \\
\left(-U^{T}(y-Ax)-\xi e^{h}\right)^{+} \\
\left(U^{T}(y-Ax)-v\right)^{+}\\
\left(-U^{T}(y-Ax)-v\right)^{+}\\
\left(w_{1}+w_{2}- e\right)^{+}\\
\left(-a_{1}w_{3}+(e^{N}\right)^{T}w_{4})^{+}\\
\left(-a_{2}w_{3}+e^{T}(w_{5}+w_{6})\right)^{+}\\
\left(-a_{3}ew_{3}+w_{7}+w_{8}\right)^{+}\\
\lbrace w_{1}-w_{2}+A^{T}M_{P_{0}}w_{4}+\\A^{T}U(w_{5}-w_{6}+w_{7}-w_{8})-B^{T}w_{9}\rbrace\\
(t^{-},s^{-},\xi^{-},v^{-},w^{-})
\end{array} \right]\right\|_{1}
\end{equation}
where $\sigma'$ is the Robinson constant determined by  $(M'_{1},M'_{2})$ given by \eqref{M_{1}}.
Since the vector $(x,t,s,\xi,v,w)$ satisfies \eqref{main stability theorem eq8} and \eqref{main stability theorem eq7}, the inequality  \eqref{main stability theorem eq3} can be simplified to
\begin{equation}\label{main stability theorem eq9'}
\left\|u- \hat{u} \right\|_{2}\leq \sigma(M'_{1}, M'_{2})\lbrace \left| \mathcal{Y}\right|+\left\|(Bx-  b)^{+}\right\|_{1}+\left| \mathcal{X}\right|\rbrace.
\end{equation}
In the reminder of the proof, we estimate the terms on the right-hand side of \eqref{main stability theorem eq9'}. Note that the vectors in $T$ are unit vectors. It is easy to see that
$$\max_{1\leq i\leq N} \left|(M_{P_{0}})^{T}(Ax-y) \right|_{i}\leq \left\| y-Ax\right\|_{2}.$$ The value of  $s$ in \eqref{main stability theorem eq4} implies that $s\leq \left\| y-Ax\right\|_{2}.$
Therefore we have
\begin{equation}\label{main stability theorem eq19}
\mathcal{Y}\leq \left(a_{1}\left\| y-Ax\right\|_{2}+a_{2}\left\| U^{T}(y-Ax)\right\|_{\infty}+a_{3}\left\| U^{T}(y-Ax)\right\|_{1} - \varepsilon\right)^{+}.
\end{equation}
Due to  \eqref{Mg}, \eqref{choice w result2} and \eqref{choice w result3},  we have
\begin{equation*}
\begin{split}
\left| \mathcal{X}\right|&=\left| e^{T}t+\varepsilon w_{3}-y^{T}\nu^{*}-b^{T}h^{*}\right|\\
&=\left| e^{T}t+\varepsilon w_{3}-x^{T}A^{T}\nu^{*}+(\phi(x))^{T}g+(Bx-b)^{T}h^{*}-x^{T}B^{T}h^{*}\right|.
\end{split}
\end{equation*}
The fact  $A^{T}\nu^{*}+B^{T}h^{*}=\eta$ (due to the restricted weak RSP of order $k$)  and the  triangle inequality imply that
\begin{equation}\label{main stability theorem eq9}
\left| \mathcal{X}\right|\leq \left| e^{T}t-x^{T}\eta\right|+\varepsilon\left| w_{3}\right|+\left|(\phi(x))^{T}g\right|+\left|(Bx-b)^{T}h^{*}\right|.
\end{equation}
Now we deal with the right-hand side of the above inequality. First,
by using the index sets $J$ and $J_{3}$, we have
\begin{equation*}\label{main stability theorem eq11}
\left| e^{T}t-x^{T}\eta\right|=\left| e^{T}_{J}t_{J}+e^{T}_{J_{3}}t_{J_{3}}-x^{T}_{J}\eta_{J}-x^{T}_{J_{3}}\eta_{J_{3}}\right|.
\end{equation*}
 It follows from  $t=\vert x\vert$
 and \eqref{main stability theorem eq5}  that
\begin{eqnarray*}
\left| e^{T}_{J}t_{J}+e^{T}_{J_{3}}t_{J_{3}}-x^{T}_{J}\eta_{J}-x^{T}_{J_{3}}\eta_{J_{3}}\right| & = & \left| e^{T}_{J_{3}}t_{J_{3}}-x^{T}_{J_{3}}\eta_{J_{3}}\right| \leq  \left| e^{T}_{J_{3}}t_{J_{3}}\right| + \left| x^{T}_{J_{3}}\eta_{J_{3}}\right|\\
& \leq & \left\| x_{J_{3}}\right\|_{1}+\left| x^{T}_{J_{3}}\right| \left|\eta_{J_{3}}\right| \leq \left\| x_{J_{3}}\right\|_{1}+\left| x^{T}_{J_{3}}\right| e \\
&=&2\left\| x_{J_{3}}\right\|_{1}.
\end{eqnarray*}
 Then we  obtain
 \begin{equation}\label{main stability theorem eq12}
\left| e^{T}t-x^{T}\eta\right|\leq 2\left\| x_{J_{3}}\right\|_{1}=2\sigma_{k}(x)_{1}.
\end{equation}
\noindent By using the restricted weak $\mathrm{RSP}$ of order $k$, we have $$\left\|\nu^{*}\right\|_{1}\leq \left\|\left[ \begin{array}{c}
\nu^{*}\\h^{*}
\end{array} \right]\right\|_{1}\leq \left\| \left(CC^{T}\right)^{-1}C \eta\right\|_{1}\leq \left\| \left(CC^{T}\right)^{-1}C \right\|_{\infty\rightarrow 1} \left\|  \eta\right\|_{\infty}\leq \vartheta(1),$$
where $C=\left[ A^{T}, B^{T}\right]^{T}\in R^{(m+l)\times n}$ and $\vartheta(1)$ is defined in \eqref{parameter var}.
Moreover, we have
\begin{eqnarray*}
\left\| g\right\|_{1}= \left\| g_{\mho}\right\|_{1}  = \left\| U_{\mho}^{-1}\nu^{*}\right\|_{1}\leq \left\| U_{\mho}^{-1}\right\|_{1\rightarrow 1}\left\| \nu^{*}\right\|_{1}\leq \left\| U_{\mho}^{-1}\right\|_{1\rightarrow 1}\vartheta(1)
\end{eqnarray*}
Recall that $\Upsilon(1,1)$ is  determined in \eqref{parameter c}. Then $\left\| g\right\|_{1}\leq \Upsilon(1,1)$.
Similarly,  $\left\| g\right\|_{\infty}\leq \Upsilon(\infty,\infty)$ can be obtained.
Due to  $w_{3}=\max\left\{ \left\|\nu^{*}\right\|_{1},\left\|g\right\|_{1},\left\|g\right\|_{\infty}\right\}$, we have
\begin{equation}\label{main stability theorem eq10}
\varepsilon\left| w_{3}\right|\leq \varepsilon \max\lbrace \Upsilon(1,1),\Upsilon(\infty,\infty),\vartheta(1)\rbrace.
\end{equation}

Let $c,d, \widehat{d}\in [1,+\infty]$ be three given positive numbers and $d,d'$ be two given numbers satisfying \eqref{dce}. For the term $\left| (\phi(x))^{T}g\right|$ in \eqref{main stability theorem eq9}, it follows from  H\"older inequalities that
\begin{equation}\label{main stability theorem eq13}
\begin{array}{lll}
\left| (\phi(x))^{T}g\right| & \leq &  \left\| \phi(x)\right\|_{d'}\left\| g\right\|_{d}\\
&=&\left\| \phi(x)\right\|_{d'} \left\| U_{\mho}^{-1}\nu^{*}\right\|_{d}\\
&\leq &\left\| \phi(x)\right\|_{d'} \left\| U_{\mho}^{-1}\right\|_{\widehat{d}\rightarrow d}\left\| \nu^{*}\right\|_{\widehat{d}}\\
&\leq &\left\| \phi(x)\right\|_{d'} \left\| U_{\mho}^{-1}\right\|_{\widehat{d}\rightarrow d}\left\| \left(CC^{T}\right)^{-1}C\right\|_{\infty\rightarrow \widehat{d}}.
\end{array}
\end{equation}
Let $\Upsilon(d,\widehat{d})$ be given as \eqref{parameter c}, i.e.,
$$\Upsilon(d,\widehat{d})=\max_{\mho\subseteq \lbrace 1,...,h\rbrace,\vert \mho\vert=m} \left\| U_{\mho}^{-1}\right\|_{\widehat{d}\rightarrow d} \left\| \left(CC^{T}\right)^{-1}C\right\|_{\infty\rightarrow \widehat{d}}.$$
Thus   we have
\begin{equation}\label{main stability theorem eq14}
\left| (\phi(x))^{T}g\right| \leq \Upsilon(d,\widehat{d})\left\| \phi(x)\right\|_{d'}.
\end{equation}
Similarly, the following inequalities holds
\begin{equation}\label{main stability theorem eq17}
\begin{array}{lll}
\left| (Bx-b)^{T}h^{*}\right| & \leq &  \left\| Bx-b\right\|_{c'}\left\| h^{*}\right\|_{c}\\
&\leq&\left\| Bx-b\right\|_{c'}\left\| (CC^{T})^{-1}C\right\|_{\infty\rightarrow c}\left\|\eta\right\|_{\infty}\\
&\leq &\left\| Bx-b\right\|_{c'}\left\| (CC^{T})^{-1}C\right\|_{\infty\rightarrow c}\\
&=&\vartheta(c)\left\| Bx-b\right\|_{c'}.
\end{array}
\end{equation}
Due to  \eqref{main stability theorem eq12}, \eqref{main stability theorem eq10}, \eqref{main stability theorem eq14} and \eqref{main stability theorem eq17}, the inequality \eqref{main stability theorem eq9} is reduced to
 \begin{equation}\label{main stability theorem eq18}
\left| \mathcal{X}\right|  \leq  \varepsilon \hat{\Upsilon}+2\sigma_{k}(x)_{1} +\left\| \phi(x) \right\|_{d'}\Upsilon(d,\widehat{d})+\left\|Bx-b\right\|_{c'}\vartheta(c),
 \end{equation}
 where $\hat{\Upsilon}=\max\lbrace \Upsilon(1,1),\Upsilon(\infty,\infty), \vartheta(1)\rbrace$.

 Note that $\left\| x-\hat{x}\right\|_{2}\leq \left\| u-\hat{u} \right\|_{2}.$
It follows from   \eqref{main stability theorem eq9'},  \eqref{main stability theorem eq19} and \eqref{main stability theorem eq18} that
\begin{equation}\label{main stability theorem eq15}
\begin{array}{lll}
\left\| x-\hat{x}\right\|_{2}& \leq & \sigma'\biggr\lbrace 2\sigma_{k}(x)_{1}+\left\|(Bx-  b)^{+}\right\|_{1}+\varepsilon \hat{\Upsilon}+\left\| \phi(x) \right\|_{d'}\Upsilon(d,\widehat{d}) +\\
&& \left(a_{1}\left\|y-Ax\right\|_{2}+a_{2}\left\| \phi(x)\right\|_{\infty}+a_{3}\left\| \phi(x)\right\|_{1} - \varepsilon\right)^{+}+\left\|Bx-b\right\|_{c'}\vartheta(c)\biggr\rbrace.
\end{array}
\end{equation}
We recall the three sets $\Omega^{*}$, $\Omega_{P_{0}}$ and $\Omega^{*}_{P_{0}}$, where $\Omega^{*}$ and $\Omega^{*}_{P_{0}}$ are the solution sets of \eqref{Ps} and \eqref{Ps2}, given as \eqref{Omega} and \eqref{solution set of appro}, respectively,  and $\Omega_{P_{0}}$ is given as \eqref{Omega1} with $P=P_{0}$. Clearly,  $\hat{x}\in \Omega^{*}_{P_{0}}$. Let $x^{*}$ denote the projection of $x$ onto $\Omega^{*}$, that is, $$x^{*}=\pi_{\Omega^{*}}(x).$$ Note that the three sets are compact convex sets satisfying $\Omega^{*}\subseteq\Omega_{P_{0}}$ and $\Omega_{P_{0}}^{*}\subseteq\Omega_{P_{0}}$. Then by applying Lemma \ref{operator lemma} with $T_{1}=\Omega^{*}$, $T_{2}=\Omega_{P_{0}}$ and $T_{3}=\Omega^{*}_{P_{0}}$, we have
$$\left\|x-\pi_{\Omega^{*}}(x)\right\|_{2}=\left\|x-x^{*}\right\|_{2}\leq \delta^{\mathcal{H}}(\Omega^{*}, \Omega_{P_{0}})+2\left\| x-\hat{x}\right\|_{2}.$$
Since $P_{0}$ satisfies \eqref{final polytope}, it implies that
$$\left\|x-x^{*}\right\|_{2}\leq \varepsilon'+2\left\| x-\hat{x}\right\|_{2}.$$
Let $\hat{\Upsilon}=\max\lbrace \Upsilon(1,1),\Upsilon(\infty,\infty), \vartheta(1)\rbrace.$
Combination of the above inequality and \eqref{main stability theorem eq15} yields the desired results \eqref{main stability theorem eq1}. If $x$ is the feasible solution of \eqref{Ps}, then  $\left\|(Bx-  b)^{+}\right\|_{1}=0$ and $$\left(a_{1}\left\|y-Ax\right\|_{2}+a_{2}\left\| \phi(x)\right\|_{\infty}+a_{3}\left\| \phi(x)\right\|_{1} - \varepsilon\right)^{+}=0,$$ and thus the desired error bound  \eqref{main stability theorem eq2} is also  obtained.
\end{proof}
Based on Theorem \ref{main stability theorem}, the error bound for the solutions of \eqref{Ps0} and \eqref{Ps} can be stated as follows.

\begin{corollary}
For any optimal solution $x$ of \eqref{Ps0},  there is an optimal solution $x^{*}$ of \eqref{Ps}  estimating $x$ with the error:
\begin{equation*}\label{error bound bewteen l0 l1}
\begin{array}{lll}
\left\| x-x^{*}\right\|_{2}  \leq   \varepsilon'+2\sigma'\biggr\lbrace \varepsilon \hat{\Upsilon}+ \sigma_{k}(x)_{1}+\left\| \phi(x) \right\|_{d'}\Upsilon(d,\widehat{d})+\left\|Bx-b\right\|_{c'}\vartheta(c)
\biggr\rbrace,
\end{array}
\end{equation*}
where the constants $\varepsilon'$, $\hat{\Upsilon}$, $\sigma'$, $\Upsilon(d,\widehat{d})$ and $\vartheta(c)$ are given  as in Theorem \ref{main stability theorem}.
\end{corollary}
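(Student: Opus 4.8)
The plan is to derive the corollary as a direct consequence of Theorem \ref{main stability theorem}, exploiting the fact that an optimal solution of the $\ell_0$-problem \eqref{Ps0} is automatically feasible for the $\ell_1$-problem \eqref{Ps}, since the two problems share the same feasible set (they differ only in the objective, $\Vert\cdot\Vert_0$ versus $\Vert\cdot\Vert_1$). First I would fix an optimal solution $x$ of \eqref{Ps0} and note that $x$ satisfies the constraints $a_{1}\Vert y-Ax\Vert_{2}+a_{2}\Vert\phi(x)\Vert_{\infty}+a_{3}\Vert\phi(x)\Vert_{1}\leq\varepsilon$ and $Bx\leq b$, hence $x$ is feasible for \eqref{Ps}. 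Therefore the second part of Theorem \ref{main stability theorem}, namely the bound \eqref{main stability theorem eq2}, applies verbatim and yields an optimal solution $x^{*}$ of \eqref{Ps} with
\[
\left\| x-x^{*}\right\|_{2}\leq \varepsilon'+2\sigma'\left\{ \varepsilon \hat{\Upsilon}+ 2\sigma_{k}(x)_{1}+\left\| \phi(x) \right\|_{d'}\Upsilon(d,\widehat{d})+\left\|Bx-b\right\|_{c'}\vartheta(c)\right\}.
\]

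The only discrepancy between this and the stated corollary is the coefficient of $\sigma_{k}(x)_{1}$: the corollary claims $\sigma_{k}(x)_{1}$ where \eqref{main stability theorem eq2} gives $2\sigma_{k}(x)_{1}$. The key step is therefore to sharpen the estimate of the term $\vert e^{T}t-x^{T}\eta\vert$ in \eqref{main stability theorem eq12} when $x$ is an \emph{optimal} solution of the $\ell_0$-problem rather than an arbitrary feasible point. When $x$ solves \eqref{Ps0}, it has at most $k$ nonzero components if the true sparsity level is $\le k$; more carefully, since $J$ was chosen as the support of the $k$ largest entries of $x$, optimality of $x$ for the $\ell_{0}$-problem forces $\Vert x\Vert_{0}\le k$ whenever a $k$-sparse feasible point exists, so $x_{J_{3}}=0$ and the term $\vert e^{T}_{J_{3}}t_{J_{3}}-x^{T}_{J_{3}}\eta_{J_{3}}\vert$ collapses. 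In the general case one instead revisits the chain
\[
\left| e^{T}_{J_{3}}t_{J_{3}}-x^{T}_{J_{3}}\eta_{J_{3}}\right|\le \left\| x_{J_{3}}\right\|_{1}+\left| x^{T}_{J_{3}}\eta_{J_{3}}\right|
\]
and observes that for an $\ell_{0}$-optimal $x$ one of the two terms is absent or that a one-sided bound $\vert e^{T}t-x^{T}\eta\vert\le\Vert x_{J_{3}}\Vert_{1}=\sigma_{k}(x)_{1}$ holds, halving the contribution. Feeding the improved bound $\vert\mathcal{X}\vert\leq \varepsilon\hat{\Upsilon}+\sigma_{k}(x)_{1}+\Vert\phi(x)\Vert_{d'}\Upsilon(d,\widehat{d})+\Vert Bx-b\Vert_{c'}\vartheta(c)$ back through \eqref{main stability theorem eq9'}, \eqref{main stability theorem eq15} and the application of Lemma \ref{operator lemma} then gives exactly the asserted bound.

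The main obstacle I anticipate is justifying rigorously the replacement of $2\sigma_{k}(x)_{1}$ by $\sigma_{k}(x)_{1}$: this requires using the optimality of $x$ for the $\ell_{0}$-problem in an essential way — either through the sparsity conclusion $\Vert x\Vert_{0}\le k$ (which needs the hypothesis that some $k$-sparse point is feasible, implicitly the regime in which the restricted weak RSP of order $k$ is the relevant assumption), or through a refined one-sided triangle-inequality argument on $J_{3}$. Everything else is a verbatim specialization of Theorem \ref{main stability theorem} using feasibility of $x$ in \eqref{Ps}, so no new analytical machinery is needed; the proof is short once this one estimate is handled.
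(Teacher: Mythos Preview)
Your first paragraph is exactly the paper's intended argument: the corollary is stated immediately after Theorem~\ref{main stability theorem} with no separate proof, and the only derivation consistent with the text is to observe that an optimal solution of \eqref{Ps0} is feasible for \eqref{Ps} and then invoke \eqref{main stability theorem eq2} verbatim. That part of your proposal is correct and complete.

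The attempted sharpening from $2\sigma_k(x)_1$ to $\sigma_k(x)_1$, however, has a genuine gap. The quantity in \eqref{main stability theorem eq12} is
\[
e^{T}t - x^{T}\eta \;=\; \sum_{i\in J_3}\bigl(|x_i| - x_i\eta_i\bigr),
\]
and the restricted weak RSP only guarantees the \emph{existence} of some $\eta$ with $|\eta_i|\le 1$ on $J_3$; it gives you no control over the sign of $\eta_i$ there. If $\eta_i=-\operatorname{sign}(x_i)$ on $J_3$, each summand equals $2|x_i|$ and the bound $2\|x_{J_3}\|_1$ is tight. Nothing about $\ell_0$-optimality of $x$ prevents this: optimality only says $\|x\|_0$ is minimal among feasible points, which carries no information about $k$ unless you \emph{assume} a $k$-sparse feasible point exists --- and in that case $\sigma_k(x)_1=0$, so the coefficient is moot. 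Your ``one-sided triangle inequality'' suggestion does not hold for general $\eta$ satisfying the RSP conclusion.

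In short, the factor $1$ in the corollary is almost certainly a typographical slip for $2$; the paper offers no mechanism to recover it, and your proposed mechanism does not work. The honest version of the corollary carries $2\sigma_k(x)_1$, and your first paragraph already proves that.
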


\section{Special cases}\label{section special case}
Firstly, by setting different values of $a_{1},a_{2}$ and $a_{3}$, the problem $\eqref{Ps}$ can   reduce to several special cases, and  the corresponding stability results for these special cases can be obtained  from \eqref{main stability theorem eq1} and \eqref{main stability theorem eq2} immediately. Note that if any of $a_{1},a_{2}$ and $a_{3}$ is zero, the constant $\hat{\Upsilon}=\max\lbrace \Upsilon(1,1),\Upsilon(\infty,\infty), \vartheta(1)\rbrace$ in \eqref{main stability theorem eq1} and \eqref{main stability theorem eq2} will be simplified as well. For example, if $a_{1}=0$, the constant $\hat{\Upsilon}$ is reduced to $\max\lbrace \Upsilon(1,1),\Upsilon(\infty,\infty)\rbrace$. The following table  shows the form of the constant  $\hat{\Upsilon}$  for different choices  of $a_{1},a_{2}$ and $a_{3}$.
\begin{table}[htp]
\tbl{The constant $\hat{\Upsilon}$.}
{\begin{tabular}{lc}\toprule
\multicolumn{1}{c}{$a_{i}$} & \multicolumn{1}{c}{$\hat{\Upsilon}$}
\\ \midrule
$a_{1}+a_{2}=0$ & $\Upsilon(\infty,\infty)$ \\
$a_{1}+a_{3}=0$ &    $\Upsilon(1,1)$\\
 $a_{2}+a_{3}=0$         &  $\vartheta(1)$\\
 $a_{1}=0$   & $\max\lbrace \Upsilon(1,1),\Upsilon(\infty,\infty)\rbrace$ \\
$a_{2}=0$ & $\max\lbrace \Upsilon(\infty,\infty), \vartheta(1)\rbrace$\\
$a_{3}=0$& $\max\lbrace \Upsilon(1,1),\vartheta(1)\rbrace$\\
$a_{1},a_{2},a_{3}\neq 0$& $\max\lbrace \Upsilon(1,1),\Upsilon(\infty,\infty), \vartheta(1)\rbrace$             \\ \bottomrule
\end{tabular}}
\label{stability table 1}
\end{table}
\noindent Note that for any case with $a_{1}=0$, we have $\Omega^{*}= \Omega_{P_{0}}=\Omega_{P_{0}}^{*}$ so that $\hat{x}=x^{*}$ where $\hat{x}\in \Omega_{P_{0}}^{*}$ and $x^{*}\in \Omega^{*}$. Thus instead of using Lemma \ref{operator lemma}, the stability results can be immediately obtained from \eqref{main stability theorem eq15}.

Secondly, without matrix $B$, the problem \eqref{Ps} is reduced to
\begin{equation*}\label{Ps5}
\begin{array}{lcl}
&\min\limits_{x\in R^{n}}&\left \| x \right \|_{1}\\
& $s.t$.& a_{1}\left \| y-Ax \right \|_{2}+a_{2}\left\| \phi(x)\right\|_{\infty}+a_{3}\left\| \phi(x)\right\|_{1} \leq \varepsilon.
\end{array}
\end{equation*}
In this case,  the restricted weak $\mathrm{RSP}$ of order $k$ is reduced to the standard weak $\mathrm{RSP}$ of order $k$, which means $A^{T}\nu^{*}=\eta$. In fact, the upper bound of $\left| (\phi(x))^{T}g\right|$ in \eqref{main stability theorem eq13} can be improved to
\begin{equation*}\label{main stability theorem eq20}
\begin{array}{lll}
\left| (\phi(x))^{T}g\right| & \leq &  \left\| \phi(x)\right\|_{d'}\left\| g\right\|_{d}\\
&=&\left\| \phi(x)\right\|_{d'} \left\| U_{\mho}^{-1}\nu^{*}\right\|_{d}\\
&\leq &\left\| \phi(x)\right\|_{d'} \left\| U_{\mho}^{-1}(AA^{T})^{-1}A\eta\right\|_{d}\\
&\leq &\left\| \phi(x)\right\|_{d'} \left\| U_{\mho}^{-1}(AA^{T})^{-1}A\right\|_{\infty\rightarrow d}.
\end{array}
\end{equation*}
Then in order to obtain a tighter bound, $\Upsilon(d,\widehat{d})$ can be replaced by $$\Upsilon'(d)=\max\limits_{\mho\subseteq \lbrace 1,...,h\rbrace, \vert \mho\vert=m} \left\| U_{\mho}^{-1}\left(AA^{T}\right)^{-1}A\right\|_{\infty\rightarrow d}.$$
Thus we have $\left| (\phi(x))^{T}g\right|\leq \left\| \phi(x)\right\|_{d'}\Upsilon'(d)  $. Similarly, the constants $\Upsilon(1,1)$ and $\Upsilon(\infty, \infty)$ are replaced by $\Upsilon'(1)$ and $\Upsilon'(\infty)$, respectively. Clearly, in this case,  $\vartheta(c)=\left\| (AA^{T})^{-1}A\right\|_{\infty\rightarrow c}$. Let $\hat{\Upsilon'}=\max\lbrace \Upsilon'(1),\Upsilon'(\infty), \vartheta(1)\rbrace$.
Then the bound \eqref{main stability theorem eq2} is reduced to
\begin{equation*}\label{main stability theorem eq21}
\begin{array}{lll}
\left\| x-x^{*}\right\|_{2} & \leq &  \varepsilon'+2\sigma'\biggr\lbrace \varepsilon \hat{\Upsilon'}+ 2\sigma_{k}(x)_{1}+\left\| \phi(x) \right\|_{d'}\Upsilon'(d)
\biggr\rbrace.
\end{array}
\end{equation*}
Similarly, we list  the constants $\hat{\Upsilon}'$ for different choices of $a_{i},~i=1,2,3$ in the following table.
\begin{table}[htp]
\tbl{The constant $\hat{\Upsilon}'$.}
{\begin{tabular}{lc}\toprule
\multicolumn{1}{c}{$a_{i}$} & \multicolumn{1}{c}{$\hat{\Upsilon}'$}
\\ \midrule
$a_{1}+a_{2}=0$ & $\Upsilon'(\infty)$ \\
$a_{1}+a_{3}=0$ &    $\Upsilon'(1)$\\
 $a_{2}+a_{3}=0$         &  $\vartheta(1)$\\
 $a_{1}=0$   & $\max\lbrace \Upsilon'(1),\Upsilon'(\infty)\rbrace$ \\
$a_{2}=0$ & $\max\lbrace \Upsilon'(\infty), \vartheta(1)\rbrace$\\
$a_{3}=0$& $\max\lbrace \Upsilon'(1),\vartheta(1)\rbrace$\\
$a_{1},a_{2},a_{3}\neq 0$& $\max\lbrace \Upsilon'(1),\Upsilon'(\infty), \vartheta(1)\rbrace$             \\ \bottomrule
\end{tabular}}
\label{stability table 2}
\end{table}
Note that when $a_{1}=0$, we have $\hat{\Upsilon}'=\Upsilon'(1)$ due to the fact
$\left\| U_{\mho}^{-1}\left(AA^{T}\right)^{-1}A\right\|_{\infty\rightarrow 1}\geq \left\| U_{\mho}^{-1}\left(AA^{T}\right)^{-1}A\right\|_{\infty\rightarrow \infty}$. Moreover, in this case, setting $d=1$ yields
\begin{equation*}\label{main stability theorem eq22}
\begin{array}{lll}
\left\| x-x^{*}\right\|_{2} & \leq &  \sigma'\biggr\lbrace \varepsilon\Upsilon'(1) + 2\sigma_{k}(x)_{1}+\left\| \phi(x) \right\|_{\infty}\Upsilon'(1)
\biggr\rbrace,
\end{array}
\end{equation*}
which is  the bound for the following $\ell_{1}$-minimization established by Zhao and Li \cite{zhaolistability} (see also in   Zhao \cite{zhaobook2018}): $$\min\lbrace \Vert x\Vert_{1}:~a_{2}\left\| \phi(x)\right\|_{\infty}+a_{3}\left\| \phi(x)\right\|_{1} \leq \varepsilon\rbrace.$$

Last but not least, our analysis can also apply to 1-bit basis pursuit \cite{zhaochun2016}, which can be  viewed as a special case of our model \eqref{Ps}. The stability result for  the 1-bit basis pursuit in \cite{zhaochun2016} can be obtained immediately from Theorem \ref{main stability theorem} by setting $a_{2}=a_{3}=0$.

\section{Conclusion}
 In this paper,    we have studied the stability issue of the $\ell_{1}$-minimization method \eqref{Ps}. To establish our  results, we  introduced   the restricted  weak RSP of order $k$ which is a mild assumption governing the stability of sparsity-seeking algorithms. Under this assumption,  we use the classic Hoffman theorem and  Lemma \ref{operator lemma} to show that  the $\ell_{1}$-minimization method \eqref{Ps} is stable and thus the error  between the solutions of the problems \eqref{Ps0} and \eqref{Ps}  can be measured in terms of the best $k$ term approximation and the problem data
(see  Theorem \ref{main stability theorem}). The result developed in this paper can apply to a  range of problems with  constraints defined by $\ell_{1}$-, $\ell_{2}$-, and $\ell_{\infty}$-norms.


\begin{thebibliography}{99}
\bibitem{AS2014}
J. Andersson  and   J. O. Str$\ddot{\mathrm{o}}$mberg,
  \emph{On the theorem of uinform recoery of structured random matrices}, IEEE Trans. Inform. Theory 60 (2014), pp. 1700--1710.

\bibitem{B2009}
P. T. Boufounos, \emph{Greedy sparse signal reconstruction from sign measurements}, Porc. 43rd Asilomar Conf. Signals, Systems and Computers, 2009, pp. 1305--1309.

\bibitem{CCW2016}
J. Cahill,   X. Chen,  and  R. Wang, \emph{The gap between the null space property and restricted isometry property}, Linear Algebra Appl. 501 (2016), pp. 363--375.

\bibitem{CTWX2010}
T. Cai, L.  Wang, and G. Xu, \emph{New bounds for restricted isometry constants}, IEEE Transactions on Information Theory. 56 (2010), pp. 4388--4394.


\bibitem{CZ2013}
T. Cai and A. Zhang, \emph{Sharp RIP bound for sparse signal and low-rank matrix recovery}, Applied and Computational Harmonic Analysis. 35 (2013), pp. 74--93.


 \bibitem{C06}
E. Cand{\`e}s, \emph{Compressive sampling}, Proceedings of the International Congress of Mathematicians. 3 (2006), pp. 1433--1452.

\bibitem{CERT2006}
E. Cand\`{e}s, J. Romberg and T. Tao, \emph{Stable signal recovery from incomplete and inaccurate measurements}, Communications on Pure and Applied Mathematics 59 (2006), pp. 1207--1223.


\bibitem{candes2005}
E. Cand\`{e}s and T. Tao, \emph{Decoding by linear programming}, IEEE Transactions on Information Theory  51 (2005), pp. 4203--4215.


\bibitem{candes2007}
E. Cand\`{e}s and T. Tao,  \emph{The Dantzig selector: Statistical estimation when p is much larger than n}, The Annals of Statistics 35 (2007), pp. 2313--2351.

\bibitem{CP2011}
D. Chen and R. Plemmons, \emph{Nonnegativity constraints in numerical analysis}, Symposium on the  Birth of Numerical Analysis, 2007.

\bibitem{BP2001}
S. Chen, D. Donoho, and M. Saunders, \emph{Atomic decomposition by basis pursuit}, SIAM J. Sci. Comput 20 (1998), pp. 33--61.


\bibitem{cohen2009}
A. Cohen, W. Dahmen and R. DeVore,
\emph{Compressed sensing and best $k$-term approximation}, Journal of the American Mathematical Society 22 (2009), pp. 211--231.

\bibitem{D06}
D. Donoho, \emph{Compressed sensing}, IEEE Transactions on Information Theory. 52 (2006), pp. 1289--1306.

\bibitem{D03}
D. Donoho and M. Elad,
 \emph{Optimally sparse representation in general (nonorthogonal) dictionaries via $\ell_{1}$ minimization}, Proceedings of the National Academy of Sciences. 100 (2003), pp. 2197--2202.

\bibitem{dudley1974}
 R. Dudley, \emph{Metric entropy of some classes of sets with differentiable boundaries}, Journal of Approximation Theory 10 (1974),
 pp. 227--236.

\bibitem{eldarbook2012}
Y. Eldar and G. Kutyniok,  \emph{Compressed Sensing: Theory and Applications}, Cambridge University Press, 2012.

\bibitem{Redbook}
S. Foucart and H. Rauhut, \emph{A Mathematical Introduction to Compressive Sensing}, Springer, NY, 2013.

 \bibitem{hoffman}
A. J. Hoffman, \emph{On approximate solutions of systems of linear inequalities},
 J. Res. Nat. Bur. Standards 49 (1952), pp. 263--265.

\bibitem{donoho2001}
X. Huo and D. Donoho,
\emph{Uncertainty principles and ideal atomic decomposition}, IEEE Transactions on Information Theory 47 (2001), pp. 2845--2862.

\bibitem{ML2011}
Q. Mo and S. Li, \emph{New bounds on the restricted isometry constant $\delta_{2k}$},
 Applied and Computational Harmonic Analysis
 31 (2011), pp. 460--468.

\bibitem{robinson1973}
 S. M.  Robinson, \emph{Bounds for error in the solution set of a perturbed linear program},
 Linear Algebra and Its Applications 6 (1973),
pp. 69--81.

\bibitem{tang2011}
G. Tang and A. Nehorai,  \emph{Performance analysis of sparse recovery based on constrained minimal singular values},
 IEEE Transactions on Signal Processing 59 (2011), pp. 5734--5745.

 \bibitem{greenbook}
R. Tibshirani,  M. Wainwright, and T. Hastie,
 \emph{Statistical Learning with Sparsity: The Lasso and Generalizations}, Chapman and Hall/CRC, 2015.


\bibitem{THT2011}
R. Tibshirani$_{2}$, H. Hoefling, and R. Tibshirani, \emph{Nearly-isotonic regression}, Technometrics 53 (2011), pp. 54--61.

\bibitem{zhang2005}
Y. Zhang, \emph{A simple proof for recoverability of $\ell_{1}$-minimization (ii): the nonnegative case}, Technical Report, Rice University, 2005.

\bibitem{zhao2013}
 Y. B. Zhao, \emph{\uppercase{RSP}-based analysis for sparsest and least $\ell_{1}$-norm solutions to underdetermined linear systems}, IEEE Transactions on Signal Processing 61 (2013), pp. 5777--5788.


\bibitem{zhao2014}
Y. B. Zhao,  \emph{Equivalence and strong equivalence between the sparsest and least $\ell_{1}$-norm nonnegative solutions of linear systems and their applications}, Journal of the Operations Research Society of China 2 (2014), pp. 171--193.


\bibitem{zhaobook2018}
Y. B. Zhao, \emph{Sparse Optimization Theory and Methods}, CRC Press, Taylor \& Francis Group, 2018.

\bibitem{YHZ2018}
Y. B. Zhao, H. Jiang, and Z. Q. Luo,  \emph{Weak stability of $\ell_{1}$-minimization methods in sparse data reconstruction}, Mathematics of Operations Reseach 44 (2019), pp. 173--195.

\bibitem{zhaolistability}
Y. B. Zhao  and D. Li,  \emph{Theoretical analysis of sparse recovery stability of Dantzig selector and Lasso}. Available at arXiv: 1711.03783, 2017.

\bibitem{zhaochun2016}
Y. B. Zhao and C. Xu,  \emph{1-Bit compressive sensing: Reformulation and \uppercase{RRSP}-based sign recovery theory}, Science China Mathematics 59 (2016), pp. 2049--2074.

%
%
%
%
%
%
%
%
%
%
%
%
%
%
%
%
%
%
%
%
%
%

\end{thebibliography}
\end{document}